\theoremstyle{plain}
\newtheorem{thm}{\bf Theorem}[section]
\newtheorem{lem}[thm]{\bf Lemma}
\newtheorem{cor}[thm]{\bf Corollary}
\theoremstyle{definition}
\newtheorem{defn}[thm]{\bf Definition}
\theoremstyle{remark}
\newtheorem{rem}[thm]{\bf Remark}
\newtheorem{exam}[thm]{\bf Example}
\theoremstyle{example}
\def \TE{\mathrm{TE}}
\def \1{\mathbf 1}
\def \NN{\mathbb N}
\def \P{\mathcal P}
\def \E{\mathcal E}
\def \E{\mathcal E}
\def \A{\mathcal A}
\def \C{\mathcal C}
\def \M{\mathcal M}
\def \B{\mathcal B}
\font\headd=cmr8
\begin{document}
\thispagestyle{plain}
 \markboth{}{}
\small{\addtocounter{page}{0} \pagestyle{plain}
\noindent{\scriptsize }\\
\noindent{\scriptsize }\\
\noindent {\scriptsize }\\
\noindent {\scriptsize }
\vspace{0.2in}\\
%%%%%%%%%%%%%%%%%%%%%%%%%%%%%%%%%%%%%%%%%%%%%%%%%%%%%%%%%%%%%%%%%%%%%%%%%%%%%%%%%%
%%%%%%%%%%%%%%%%%%%%%%%%%%%%%%%%%%%  Title in First page  %%%%%%%%%%%%%%%%%%%%%%%%
%%%%%%%%%%%%%%%%%%%%%%%%%%%%%%%%%%%%%%%%%%%%%%%%%%%%%%%%%%%%%%%%%%%%%%%%%%%%%%%%%%
\noindent{\large\bf Expansion and contraction functors on matriods}
\footnote{{}\\ \\[-0.7cm]
* Corresponding Author.\\
%Received March 00, 2014; revised May 00, 2014; accepted November 00, 2014.\\
%%%%%%%%%%%%%%%%%%%%%%%%%%%%%%%%%%%%%%%%%%%%%%%%%%%%%%%%%%%%%%%%%%%%%%%%%%%%%%%%%%
%%%%%%%%%%%%%%%%%%%% 2010 Mathematics Subject Classification %%%%%%%%%%%%%%%%%%%%%
%%%%%%%%%%%%%%%%%%%%%%%%%%%%%%%%%%%%%%%%%%%%%%%%%%%%%%%%%%%%%%%%%%%%%%%%%%%%%%%%%%
2010 Mathematics Subject Classification: Primary 05B35; Secondary 52B40.\\
%%%%%%%%%%%%%%%%%%%%%%%%%%%%%%%%%%%%%%%%%%%%%%%%%%%%%%%%%%%%%%%%%%%%%%%%%%%%%%%%%%
%%%%%%%%%%%%%%%%%%%%%%%%%%%%%%% Key words and phrases %%%%%%%%%%%%%%%%%%%%%%%%%%%%
%%%%%%%%%%%%%%%%%%%%%%%%%%%%%%%%%%%%%%%%%%%%%%%%%%%%%%%%%%%%%%%%%%%%%%%%%%%%%%%%%%
Key words and phrases: expansion functor, contraction functor, White's conjecture.\\
%This work was supported by -------------.\\
}\vspace{0.15in}\\
%%%%%%%%%%%%%%%%%%%%%%%%%%%%%%%%%%%%%%%%%%%%%%%%%%%%%%%%%%%%%%%%%%%%%%%%%%%%%%%%%%
%%%%%%%%%%%%%%%%%%%%%%%%%% Authors & Address & E-mail %%%%%%%%%%%%%%%%%%%%%%%%%%%%
%%%%%%%%%%%%%%%%%%%%%%%%%%%%%%%%%%%%%%%%%%%%%%%%%%%%%%%%%%%%%%%%%%%%%%%%%%%%%%%%%%
\noindent{\sc Rahim Rahmati-Asghar$^*$}
\newline
{\it Department of Mathematics, Faculty of Basic Sciences, University of Maragheh, P. O. Box 55181-83111, Maragheh, Iran.\\
e-mail} : {\verb|rahmatiasghar.r@gmail.com|}
\vspace{0.15in}\\
%%%%%%%%%%%%%%%%%%%%%%%%%%%%%%%%%%%%%%%%%%%%%%%%%%%%%%%%%%%%%%%%%%%%%%%%%%%%%%%%%%
%%%%%%%%%%%%%%%%%%%%%%%%%% Abstract Abstract Abstract %%%%%%%%%%%%%%%%%%%%%%%%%%%%
%%%%%%%%%%%%%%%%%%%%%%%%%%%%%%%%%%%%%%%%%%%%%%%%%%%%%%%%%%%%%%%%%%%%%%%%%%%%%%%%%%
{\footnotesize {\sc Abstract.} Let $M$ be a matroid. We study the expansions of $M$ mainly to see how the combinatorial properties of $M$ and its expansions are related to each other. It is shown that $M$ is a graphic, binary or a transversal matroid if and only if an arbitrary expansion of $M$ has the same property. Then we introduce a new functor, called contraction, which acts in contrast to expansion functor. As a main result of paper, we prove that a matroid $M$ satisfies White's conjecture if and only if an arbitrary expansion of $M$ does. It follows that it suffices to focus on the contraction of a given matroid for checking whether the matroid satisfies White's conjecture. Finally, some classes of matroids satisfying White's conjecture are presented.
}
\vspace{0.2in}\\
\pagestyle{myheadings}
 \markboth{\headd Rahim Rahmati-Asghar$~~~~~~~~~~~~~~~~~~~~~~~~~~~~~~~~~~~~~~~~~~~~~\,$}
 {\headd $~~~~~~~~~~~~~~~~~~~~~~~~~~~~~~~~~~~~~$Expansion and contraction functors on matriods}\\
%
%
%
%
%%%%%%%%%%%%%%%%%%%%%%%%%%%%%%%%%%%%%%%%%%%%%%%%%%%%%%%%%%%%%%%%%%%%%%%%%%%%%%%%%%
%%%%%%%%%%%%%%%%%%%%%%%%%% Contents of Section 1 %%%%%%%%%%%%%%%%%%%%%%%%%%%%%%%%%
%%%%%%%%%%%%%%%%%%%%%%%%%%%%%%%%%%%%%%%%%%%%%%%%%%%%%%%%%%%%%%%%%%%%%%%%%%%%%%%%%%
\section*{Introduction}

Matroids are abstract combinatorial structures that capture the notion of independence that is common to a surprisingly large number of mathematical
entities. They were introduced by Whitney in 1935 as a common generalization of independence in linear algebra and independence in graph theory \cite{Whn}. Matroid theory is one of the most fascinating research areas in combinatorics. It was linked to projective geometry by Mac Lane \cite{Ma}, and have found a great many applications in several branches of mathematics \cite{Wh}. In this regard studying the structural properties of matroids from different point views have been considered by many researchers. Also classifying matroids with a desired property or making modifications to a matroid so that it satisfies a special property is the subject of many research papers, see for example \cite{Bl,Ho,Ka,LaMi,RaYa,Sc,Wh}.

The notion of expansion is a known notion appeared in different terminologies as parallelization or duplication in combinatorics \cite{Go,FrHaVi,MaReVi,Scr}. In \cite{RaYa}, the authors studied behaviors of expansion functor on some algebraic structures associated to discrete polymatroids. It was shown that a nonempty finite set is a discrete polymatroid if and only if its an arbitrary expansion is a discrete polymatroid (c.f. \cite[Theorem 1.2.]{RaYa}). The discrete polymatroid is a multiset analogue of the matroid. Moreover, there are several classes of matroids so that the study of each of them is interesting in its own right. This motivates to focus on the behaviors of the expansion functor on some classes of matroids and to investigate some structural properties of them in this paper. Our goal in this paper is to investigate more relations between the exchange property of bases of a matroid and those of its expansions. It turns out that the exchange properties of bases of a matroid are preserved under taking the expansion functor
and so this construction is a very good tool to make new matroids with a desired property. Moreover, by taking another functor, contraction functor, which is the opposite to the expansion functor we will able to construct a new matroid, with possibly smaller ground set, from a given matroid and check a desired property on new matroid instead of primary one.

White in 1980 proposed a conjecture about the bases of a matroid \cite{Wh}. This conjecture has received much attention in recent years and has some algebraic and combinatorial variants, all of which are open problems. Up to now, several mathematicians confirmed only some variants of this conjecture for special classes of matroids (see for example \cite{Bl, Bo, Ho, Ka, LaMi, RaYa, Sc, St}).

White \cite{Wh} defined three classes $\TE(1)$, $\TE(2)$ and $\TE(3)$ of matroids and conjectured that $\TE(1)=\TE(2)=\TE(3)=$the class of all matroids.

We investigate the effect of the expansion functor on the exchange property for bases of matroids and conclude that White's conjecture is preserved under taking the expansion or contraction functor.

The paper is organized as follows. In Section 1, we review some preliminaries which are needed in the sequel. In Section 2, we investigate the expansion of some classes of matroids. We show that a matroid is graphic, binary or transversal if and only if its an arbitrary expansion has such a property (see Theorems \ref{graphic}, \ref{binary} and \ref{transversal}). Also, we prove that the expansion of an uniform matroid is a partition matroid and, conversely, every partition matroid is an expansion of an uniform matroid (see Theorem \ref{partition uniform}). In Section 3, we introduce the contraction functor which acts in contrast to expansion functor. The last section is devoted to the study of unique exchange property. After recalling some notions and notations from \cite{Wh}, we formulate White's conjecture \cite[Conjecture 12]{Wh}. As one of the main results, we show that a matroid $M$ satisfies White's conjecture if and only if an arbitrary expansion of $M$ does (see Theorem \ref{TE}). This concludes that $M$ satisfies White's conjecture if and only if its contraction does (see Corollary \ref{contract white}). On the other hand, since the class of contracted matroids is very smaller than the class of all matroids, it follows from Corollary \ref{contract white} that to test White's conjecture for a given class of matroids it suffices to turn our attention to their contractions. Finally, we give some classes of matroids which satisfy White's conjecture.

\section{Preliminaries}

A matroid $M$ is a pair $(\E_M,\B_M)$ consisting of a finite set $\E_M$
and a non-empty family $\B_M$ of subsets of $\E_M$ such that no set in $\B_M$ properly contains another set in $\B_M$ and, moreover, $\B_M$ satisfies the following \emph{exchange property}:
\begin{center}
 for every $B_1,B_2\in\B_M$ and $x\in B_1\backslash B_2$ there exists $y\in B_2\backslash B_1$, such that $(B_1\backslash x)\cup y\in\B_M$.
\end{center}
$\E_M$ and $\B_M$ are, respectively, called the \emph{ground set} and the \emph{basis set} of $M$.
The background from matroid theory which we use may be obtained from \cite{Ox} or \cite{We}.

Recall from \cite{RaYa} the concept of expansion functor on a family of subsets of $[n]=\{x_1,\ldots,x_n\}$. Let $\alpha=(k_1,\ldots,k_n)\in\NN^n$. For $A=\{x_{i_1},\ldots,x_{i_r}\}\subseteq [n]$, the \emph{expansion} of $A$ is defined
$$A^\alpha=\{x_{i_11},\ldots,x_{i_1k_{i_1}},\ldots,x_{i_r1},\ldots,x_{i_rk_{i_r}}\}.$$

Let $\A$ be a family of subsets of $[n]$ and $A=\{x_{i_1},\ldots ,x_{i_r}\}\in\A$. Set $[n]^\alpha=\{x_{ij}:1\leq i\leq n,1\leq j\leq k_i\}$. The \emph{expansion} of the singleton family $\{A\}$ with respect to $\alpha$ is denoted by $\{A\}^\alpha$ and it is a family of subsets of $[n]^\alpha$ defined as follows:
$$\{A\}^\alpha=\{\{x_{i_1j_1},\ldots ,x_{i_rj_r}\}:1\leq j_l\leq k_{i_l}\ \mbox{for all}\ l\}.$$
Also, the expansion of $\A$ with respect to $\alpha$ is denoted by $\A^\alpha$ and it is defined
$$\A^\alpha=\bigcup_{A\in\A}\{A\}^\alpha.$$

Let $2^{[n]}$ denote the set of all subsets of $[n]$ and let $\alpha\in\NN^n$. We define the map
$\pi:2^{[n]^\alpha}\rightarrow 2^{[n]}$ by setting $\pi(\{x_{i_1j_1},\ldots,x_{i_rj_r}\})=\{x_{i_1},\ldots,x_{i_r}\}$.

The following theorem is a direct consequence of \cite[Theorem 1.2]{RaYa}:

\begin{thm}\label{expan matroid}
Let $M$ be a nonempty family of subsets of $[n]$ and let $\alpha\in\NN^n$. Then $M$ is a matroid if and only if $M^\alpha$ is.
\end{thm}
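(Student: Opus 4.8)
The plan is to reduce Theorem~\ref{expan matroid} to \cite[Theorem 1.2]{RaYa}, the analogous statement for discrete polymatroids, by viewing a matroid as the special case of a discrete polymatroid whose bases are $0/1$-vectors. First I would set up the dictionary: to a family $M$ of subsets of $[n]$ associate the set of indicator vectors $\{\,\epsilon_A:A\in M\,\}\subseteq\NN^n$, where $\epsilon_A=\sum_{x_i\in A}e_i$. Under this correspondence the defining conditions of a matroid (no set contains another, plus the basis exchange property) translate exactly into the conditions that the set of indicator vectors be the set of bases of a discrete polymatroid of rank $|A|$; this is the classical observation that matroids are precisely the polymatroids all of whose bases lie in $\{0,1\}^n$. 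So $M$ is a matroid on $[n]$ if and only if $\{\epsilon_A:A\in M\}$ is (the basis set of) a discrete polymatroid.

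Next I would check that the combinatorial expansion operation $M\mapsto M^\alpha$ defined in the excerpt is compatible, under this dictionary, with the expansion functor on discrete polymatroids used in \cite{RaYa}. Concretely, for $\alpha=(k_1,\dots,k_n)$ the expansion of a discrete polymatroid replaces the $i$-th coordinate by $k_i$ coordinates, and the bases of the expansion are obtained by distributing the value of the $i$-th coordinate of a base among the new coordinates in all possible ways. When the original base is a $0/1$-vector $\epsilon_A$, a coordinate equal to $1$ can be placed in exactly one of the $k_i$ new slots, which is precisely the recipe $\{A\}^\alpha=\{\{x_{i_1j_1},\dots,x_{i_rj_r}\}:1\le j_l\le k_{i_l}\}$ from the excerpt, and $\{\epsilon_B:B\in M^\alpha\}$ is exactly the expansion of $\{\epsilon_A:A\in M\}$ in the sense of \cite{RaYa}. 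One should note here that the expansion of a $0/1$-polymatroid is again $0/1$ (each new coordinate is at most $1$), so staying inside the class of matroids is automatic.

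With these two translations in place the theorem follows immediately: $M$ is a matroid $\iff$ $\{\epsilon_A:A\in M\}$ is a discrete polymatroid $\iff$ (by \cite[Theorem 1.2]{RaYa}) its expansion is a discrete polymatroid $\iff$ $\{\epsilon_B:B\in M^\alpha\}$ is a discrete polymatroid $\iff$ $M^\alpha$ is a matroid. The main thing to be careful about is the bookkeeping in the second step --- verifying that the ad hoc set-theoretic description of $\{A\}^\alpha$ and $\A^\alpha=\bigcup_{A\in\A}\{A\}^\alpha$ matches the vector-level expansion functor of \cite{RaYa} on the nose, including that no spurious non-maximal or maximal containments are introduced --- but this is a routine unraveling of definitions rather than a genuine obstacle. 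A reader who prefers not to invoke the polymatroid machinery could instead prove both implications directly: for the forward direction, take bases $B_1,B_2\in M^\alpha$ and $x=x_{i_1j_1}\in B_1\setminus B_2$, push down via $\pi$ to get an exchange partner $y=x_{kl}$ for $\pi(x)$ in $M$, and lift it back to an appropriate new coordinate; for the converse, observe that $\pi$ maps $M^\alpha$ onto $M$ and use that any single base of $M$ already appears (with all its $\alpha$-fattenings) inside $M^\alpha$ to transport the exchange property back down. Either way the content is light once the correspondence with \cite{RaYa} is set up.
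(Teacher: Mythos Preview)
Your proposal is correct and matches the paper's own treatment: the paper simply states that Theorem~\ref{expan matroid} is a direct consequence of \cite[Theorem 1.2]{RaYa} and gives no further argument. Your write-up supplies exactly the dictionary (matroids as $0/1$ discrete polymatroids, and compatibility of the two expansion functors) that makes this citation go through, so there is nothing to add or correct.
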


The \emph{restriction} of a matroid $(\E_M,\B_M)$ to $X\subseteq\E_M$ is denoted by $M_X$ and it is a matroid with the ground set $\E_{M_X}=\E_M\cap X$ and the basis set
$$\B_{M_X}=\{B\cap X:B\in\B_M\}.$$

\section{The expansion of some classes of matroids}

Let $G=(V(G),E(G))$ be an undirected graph (with possibly loops or parallel edges). A \emph{spanning subgraph} of a graph $G$ is a subgraph whose vertex set is the entire vertex set of $G$. If this spanning subgraph is a tree, it is called a \emph{spanning tree} of the graph.

Let $\E=E(G)$ and $\B=\{B\subset E(G):B\ \mbox{is a spanning tree of}\ G\}$. Then $M=(\E,\B)$ is a matroid called a \emph{graphic matroid}. The graphic matroid associated with the graph $G$ is denoted by $M(G)$.

\begin{thm}\label{graphic}
Let $M$ be a matroid on $[n]$ and $\alpha=(k_1,\ldots,k_n)\in\NN^n$. Then $M$ is graphic if and only if $M^\alpha$ is graphic.
\end{thm}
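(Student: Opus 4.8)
The plan is to prove both directions by passing between a graph $G$ on which $M$ is graphic and a graph $G^\alpha$ obtained from $G$ by replacing edges with parallel bundles, exploiting the fact (Theorem~\ref{expan matroid}) that $M^\alpha$ is automatically a matroid once $M$ is. The cleanest route is to understand what the expansion operation does to edges of a graph: if $M=M(G)$ and an element $x_i\in[n]$ corresponds to an edge $e_i$ of $G$, then the expansion replaces $e_i$ by $k_i$ parallel copies $e_{i1},\ldots,e_{ik_i}$ with the same endpoints. Call the resulting graph $G^\alpha$. I would first show directly, from the definition of spanning trees, that the spanning trees of $G^\alpha$ are exactly the sets $\{e_{i_1 j_1},\ldots,e_{i_r j_r}\}$ where $\{e_{i_1},\ldots,e_{i_r}\}$ ranges over spanning trees of $G$ and each $j_l$ is an arbitrary choice in $\{1,\ldots,k_{i_l}\}$: indeed, choosing one copy from each bundle used by a spanning tree of $G$ gives a spanning subgraph with the same cycle structure, hence a spanning tree of $G^\alpha$, and conversely a spanning tree of $G^\alpha$ can use at most one edge from each parallel bundle (two parallel edges form a cycle) and, after contracting the bundle-indices, projects to a spanning tree of $G$. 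This says precisely $\B_{M(G^\alpha)}=\B_{M(G)}^\alpha$, i.e. $M(G)^\alpha=M(G^\alpha)$. Hence if $M$ is graphic, say $M=M(G)$, then $M^\alpha=M(G^\alpha)$ is graphic.

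For the converse, suppose $M^\alpha$ is graphic. Here I would use the restriction operation recorded at the end of Section~1 together with the observation that $M$ is recovered from $M^\alpha$ by restricting to a transversal of the blown-up ground set. Concretely, let $X=\{x_{11},x_{21},\ldots,x_{n1}\}\subseteq[n]^\alpha$ pick out the first copy of each element; then from the description $\B_{M^\alpha}=\B_M^\alpha$ one checks that $(M^\alpha)_X$ has ground set $X$ and basis set $\{B\cap X: B\in\B_{M^\alpha}\}$, which is in bijection (via $x_{i1}\mapsto x_i$) with $\B_M$. So $M$ is isomorphic to a restriction of $M^\alpha$. Since the class of graphic matroids is closed under restriction (a standard fact: restricting $M(H)$ to a subset of edges is $M$ of the corresponding subgraph), $M$ is graphic. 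I should be a little careful that $(M^\alpha)_X$ is genuinely isomorphic to $M$ and not merely has bases of the same projected shape — but this is exactly what the identity $\B_{M^\alpha}=\B_M^\alpha$ gives, since distinct bases of $M$ yield distinct intersections with $X$ and every basis of $M$ arises as $\pi$ of some basis of $M^\alpha$ meeting $X$ in a full transversal of its support.

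The main obstacle, and the place to spend the most care, is the combinatorial lemma $M(G)^\alpha=M(G^\alpha)$, specifically the claim that every spanning tree of $G^\alpha$ uses at most one edge from each parallel bundle and that collapsing the bundles sends spanning trees to spanning trees bijectively on supports. This needs the elementary graph-theoretic facts that (i) two distinct parallel edges form a cycle, so no tree contains both, and (ii) a spanning subgraph of $G^\alpha$ using exactly one representative from a subset $S$ of bundles is a tree if and only if the corresponding subgraph of $G$ on the edge set $S$ is a tree (same vertex set, same number of independent cycles, since parallel-edge substitution does not change the cycle matroid up to this relabeling). Both are routine but should be stated. Everything else — closure of graphic matroids under restriction, the matroid property of $M^\alpha$ — is either standard or already available from Theorem~\ref{expan matroid}, so the proof reduces to assembling these pieces.
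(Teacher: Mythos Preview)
Your proof is correct. The converse direction is essentially identical to the paper's: both restrict $M^\alpha$ to $X=\{x_{i1}:1\le i\le n\}$ and invoke closure of graphic matroids under restriction (the paper cites \cite{Jo} for this; you state it directly).

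The forward direction is where you genuinely diverge. The paper proceeds by induction on $\alpha$: it first treats the base case $\alpha=\1+\varepsilon_1$ by adding a single parallel edge, and then uses the auxiliary Lemma~\ref{epsilon}, which says $(\A^\beta)^{\1+\varepsilon_{ik_i}}\cong\A^\alpha$ when $\alpha=\beta+\varepsilon_i$, to peel off one parallel edge at a time. You instead build $G^\alpha$ in one stroke by replacing each edge $e_i$ with a bundle of $k_i$ parallel copies and argue directly that $\B_{M(G^\alpha)}=\B_{M(G)}^\alpha$ via the two elementary observations you isolate: a spanning tree of $G^\alpha$ contains at most one edge from each bundle, and choosing representatives from a spanning tree of $G$ yields a spanning tree of $G^\alpha$. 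Your route is shorter and more transparent for this particular theorem, and it avoids setting up the relabeling bijection of Lemma~\ref{epsilon}. The paper's inductive decomposition, on the other hand, is reusable: Lemma~\ref{epsilon} is a general statement about iterated expansions that could in principle serve other arguments of the same shape, and the induction reduces everything to understanding a single parallel-edge addition. Both are sound; yours is the more direct attack on this statement, while the paper's is slightly more modular.
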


We need some notations and an auxiliary lemma:

For any $1\leq i\leq n$, let $\varepsilon_i=(a_1,\ldots,a_n)\in \NN^n$ be defined as
$a_j=\left\{
\begin{array}{ll}
0  & \hbox{if}\ j\neq i \\
1 & \hbox{if}\ j=i.
\end{array}
\right. $
Set $\1=(1,1,\ldots,1)\in\NN^n$.

\begin{lem}\label{epsilon}
Let $\A$ be a family of subsets of $[n]$, $\beta=(k_1,\ldots,k_n)\in\NN^n$ and $\alpha=\beta+\delta_i$. Then $(\A^\beta)^{\1+\varepsilon_{ik_i}}\cong \A^\alpha$.
\end{lem}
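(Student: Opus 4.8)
The plan is to reduce the statement to producing an explicit bijection between the two ambient ground sets and then checking that it matches up the two families of sets; once this is done the matroid isomorphism is automatic, since an expansion of a matroid is again a matroid (Theorem \ref{expan matroid}) and a bijection of ground sets carrying one set system onto another carries bases onto bases.

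First I would unwind both ground sets. Writing $\beta=(k_1,\dots,k_n)$ and taking $\alpha=\beta+\varepsilon_i$ (so the $i$-th entry of $\alpha$ is $k_i+1$ and all the others equal those of $\beta$), the set $[n]^\alpha$ consists of the symbols $x_{lj}$ with $l\neq i$, $1\le j\le k_l$, together with $x_{i1},\dots,x_{ik_i},x_{i,k_i+1}$. On the other side, $[n]^\beta=\{x_{lj}:1\le l\le n,\ 1\le j\le k_l\}$, and forming its expansion with respect to $\1+\varepsilon_{ik_i}$ keeps one copy of every symbol except $x_{ik_i}$, of which it makes two copies; write these two copies as $x_{ik_i}'$ and $x_{ik_i}''$. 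Both ground sets then have $1+\sum_l k_l$ elements.

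Next I would define the map $\phi$ on $([n]^\beta)^{\1+\varepsilon_{ik_i}}$ by $x_{ik_i}'\mapsto x_{ik_i}$, $x_{ik_i}''\mapsto x_{i,k_i+1}$, and $\phi(x_{lj})=x_{lj}$ for every other symbol; this is a bijection onto $[n]^\alpha$. To see that $\phi\big((\A^\beta)^{\1+\varepsilon_{ik_i}}\big)=\A^\alpha$ it is enough, since expansion commutes with the union over members of a family, to treat a single $A=\{x_{i_1},\dots,x_{i_r}\}\in\A$. The key observation is that $i_1,\dots,i_r$ are pairwise distinct, so each member $\{x_{i_1j_1},\dots,x_{i_rj_r}\}$ of $\{A\}^\beta$ contains at most one symbol whose first index is $i$; hence the second expansion leaves such a member alone unless that symbol is exactly $x_{ik_i}$, in which case it replaces it by the two members obtained by using $x_{ik_i}'$ or $x_{ik_i}''$. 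Applying $\phi$, the index $j_l$ attached to $i_l=i$ now runs over every value in $\{1,\dots,k_i+1\}$ — the values $1,\dots,k_i-1$ unchanged and the old value $k_i$ replaced by the pair $k_i,\ k_i+1$ — while every other $j_l$ still runs over $\{1,\dots,k_{i_l}\}$. That is precisely the description of $\{A\}^\alpha$, and taking the union over $A\in\A$ completes the verification.

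I expect no genuine obstacle here: the content is bookkeeping. The only points needing a little care are the (triple-indexed) notation for the iterated expansion and the remark just made that each $A\in\A$ contributes symbols with pairwise distinct first indices, which is what forces the second expansion to act on a member of $\{A\}^\beta$ either by "do nothing" or by "duplicate a single symbol". As a side remark, the same bijection shows more generally that $(\A^\beta)^{\1+\varepsilon_{ij_0}}\cong\A^{\beta+\varepsilon_i}$ for any $1\le j_0\le k_i$, not just for $j_0=k_i$, because the $k_i$ copies of $x_i$ in $[n]^\beta$ play interchangeable roles.
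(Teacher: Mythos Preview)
Your proposal is correct and follows essentially the same approach as the paper: both arguments write down the explicit bijection $([n]^\beta)^{\1+\varepsilon_{ik_i}}\to[n]^\alpha$ that sends the two copies of $x_{ik_i}$ to $x_{ik_i}$ and $x_{i,k_i+1}$ and is the identity elsewhere, and then observe that this bijection carries $(\A^\beta)^{\1+\varepsilon_{ik_i}}$ onto $\A^\alpha$. The paper simply asserts the induced bijection on families without further comment, whereas you spell out the verification set by set; your additional remark about arbitrary $j_0$ is a harmless and correct generalization.
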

\begin{proof}
Note that
$$[n]^\alpha=\{x_{11},\ldots,x_{1k_1},\ldots,x_{i1},\ldots,x_{ik_i},x_{i(k_i+1)},\ldots,x_{n1},\ldots,x_{nk_n}\}$$
and
$$([n]^\beta)^{\1+\varepsilon_{ik_i}}=\{x_{111},\ldots,x_{1k_11},\ldots,x_{i11},\ldots,x_{ik_i1},x_{ik_i2},\ldots,x_{n11},\ldots,x_{nk_n1}\}.$$

Define $\varphi:([n]^\beta)^{\1+\varepsilon_{ik_i}}\rightarrow [n]^\alpha$ given by
$$\varphi(x_{rst})=\left\{
  \begin{array}{ll}
    x_{rs} & t=1 \\
    x_{i(k_i+1)} & t=2.
  \end{array}
\right.$$

Then $\varphi$ induces the bijection

$$\begin{array}{cr}
    \theta: & (\A^\beta)^{\1+\varepsilon_{ik_i}}\rightarrow \A^\alpha. \\
     & F\mapsto \varphi(F)
  \end{array}$$
\end{proof}

Now we prove Theorem \ref{graphic}:

\begin{proof}
Let $G$ be a graph with $E(G)=\{x_1,\ldots,x_n\}$ and $M\cong M(G)$. We use induction on $\alpha$.

First, suppose that $k_1=2$ and $k_i=1$ for all $i>1$. Let $G'$ be a graph with the vertex set $V(G)$ and the edge set $E(G')=\{x_{i1},x_{12}:i=1,\ldots,n\}$ where $x_{i1}=x_i$ for all $i$ and $x_{11}$ and $e_{12}$ are parallel. It is easy to check that $M(G)^\alpha\cong M(G')$.

Now, suppose that $k_1>1$ and $\beta\in\NN^n$ with $\beta(i)=k_i$ if $2\leq i\leq n$ and $\beta(1)=k_1-1$. Assume that $M(G)^\beta\cong M(H)$ is graphic. Then it follows from induction hypothesis and Lemma \ref{epsilon} that $M(G)^\alpha\cong (M(G)^\beta)^{\varepsilon_1}\cong M(G')$ where $G'$ is obtained from $H$ by adding a parallel edge.

Conversely, suppose $M^\alpha$ is graphic and $M^\alpha\cong M(G')$. Set $X=\{x_{i1}:1\leq i\leq n\}$. Then $(M^\alpha)_X\cong M$ is graphic by \cite[page 842]{Jo}.
\end{proof}

\begin{exam}
Consider the matroid $M$ on $[6]$ with basis set
$$\B_M=\{\{x_1,x_2,x_3\},\{x_1,x_2,x_4\},\{x_1,x_3,x_4\},\{x_1,x_3,x_5\},$$
$$\{x_1,x_4,x_5\},\{x_2,x_3,x_4\},\{x_2,x_3,x_5\},\{x_2,x_4,x_5\}\}.$$
$M$ is a graphic matroid associated with the graph $G$ shown in Figure \ref{2}. Let $\alpha=(1,1,1,1,2,2)\in\NN^6$. Then $M^\alpha$ is a graphic matroid and $M^\alpha$ is associated with $G'$ (see Figure \ref{2}), obtained from $G$ by adding parallel edges to $x_5$ and $x_6$.
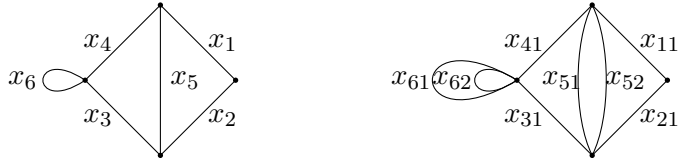
\begin{figure}
$$\begin{array}{ccccc}
\begin{tikzpicture}
\coordinate (v1) at (0,1);\fill (0,1) circle (1pt);
%\node[above] at (0,1)    {$v_1$};
\coordinate (v2) at (1,0);\fill (1,0) circle (1pt);
%\node[right] at (1,0)    {$v_2$};
\coordinate (v3) at (0,-1);\fill (0,-1) circle (1pt);
%\node[below] at (0,-1)    {$v_3$};
\coordinate (v4) at (-1,0);\fill (-1,0) circle (1pt);
%\node[left] at (-1,0)    {$v_4$};
\node[left] at (-1.5,0)    {$x_6$};
\draw (v1) to node[right] {$x_1$} (v2);
\draw (v2) to node[right] {$x_2$} (v3);
\draw (v3) to node[left] {$x_3$} (v4);
\draw (v1) to node[left] {$x_4$} (v4);
\draw (v1) to node[right] {$x_5$} (v3);
\draw (-1,0) .. controls (-1.75,0.5) and (-1.75,-0.5) .. (-1,0);
%\draw (0,0) .. controls (0.5,0.25) and (1.5,0.25) .. (2,0);
\end{tikzpicture}
& \ & \ & \ &
\begin{tikzpicture}%[shorten >=2pt]
\coordinate (v1) at (0,1);\fill (0,1) circle (1pt);
%\node[above] at (0,1)    {$v_1$};
\coordinate (v2) at (1,0);\fill (1,0) circle (1pt);
%\node[right] at (1,0)    {$v_2$};
\coordinate (v3) at (0,-1);\fill (0,-1) circle (1pt);
%\node[below] at (0,-1)    {$v_3$};
\coordinate (v4) at (-1,0);\fill (-1,0) circle (1pt);
%\node[left] at (-1,0)    {$v_4$};
\node[left] at (-1.45,0)    {$x_{62}$};
\node[left] at (-2,0)    {$x_{61}$};
\node[left] at (0.85,0)    {$x_{52}$};
\node[right] at (-0.8,0)    {$x_{51}$};
\draw (v1) to node[right] {$x_{11}$} (v2);
\draw (v2) to node[right] {$x_{21}$} (v3);
\draw (v3) to node[left] {$x_{31}$} (v4);
\draw (v1) to node[left] {$x_{41}$} (v4);
\draw (0,1) .. controls (-0.25,0.5) and (-0.25,-0.5) .. (0,-1);
\draw (0,1) .. controls (0.25,0.5) and (0.25,-0.5) .. (0,-1);
\draw (-1,0) .. controls (-1.75,0.5) and (-1.75,-0.5) .. (-1,0);
\draw (-1,0) .. controls (-2.5,0.9) and (-2.5,-0.9) .. (-1,0);
\end{tikzpicture}
\end{array}$$
  \caption{The graphs $G$ and $G'$}\label{2}
\end{figure}

\end{exam}

A subset of the ground set of a matroid $M$ that is contained in no bases of $M$ is called \emph{dependent}. A \emph{circuit} in $M$ is a minimal dependent subset (with respect to inclusion) of $\E_M$ and the set of circuits of $M$ is denoted by $\C(M)$. A matroid $M$ is \emph{binary} if and only if for every pair of circuits of $M$, their symmetric difference contains another circuit. See \cite[Theorem 9.1.2]{Ox} for other equivalent definitions of binary matroids.

\begin{thm}\label{binary}
Let $M$ be a matroid on $[n]$ and $\alpha\in\NN^n$. Then $M$ is binary if and only if $M^\alpha$ is.
\end{thm}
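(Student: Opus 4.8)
The plan is to mimic the strategy used for Theorem~\ref{graphic}: reduce to a single elementary expansion step via Lemma~\ref{epsilon} and induction on $\alpha$, and for that step understand explicitly how circuits behave under an expansion that duplicates one element. Concretely, write $\alpha = \1 + \varepsilon_{ik_i}$ applied to some previously expanded matroid, so it suffices to prove the statement when $\alpha$ replaces a single ground-set element $x_i$ by two parallel copies $x_{i1},x_{i2}$ and leaves everything else fixed. Thus the real content is: if $N$ is a matroid on $[n]$ and $N'$ is the matroid obtained by adding one element $x'$ parallel to $x_i$, then $N$ is binary iff $N'$ is.

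The key step is a description of $\C(N')$ in terms of $\C(N)$. The two-element set $\{x_i, x'\}$ is a circuit of $N'$; beyond that, every circuit of $N'$ either lies in the original ground set and is a circuit of $N$, or is obtained from a circuit $C \in \C(N)$ containing $x_i$ by swapping $x_i$ for $x'$. (This is the standard behaviour of circuits under parallel extension, and I would either cite it from \cite{Ox} or verify it directly from the circuit axioms.) With this in hand, the forward direction goes as follows: given two circuits $D_1, D_2$ of $N'$, I want a third circuit inside $D_1 \triangle D_2$. If both avoid $x'$, they are circuits of $N$ and binarity of $N$ gives a circuit of $N$ (hence of $N'$) in the symmetric difference. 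If, say, $D_1 = \{x_i, x'\}$, then $D_1 \triangle D_2$ either removes $x'$ or $x_i$ from $D_2$ (or contains $\{x_i,x'\}$ again if $D_2$ meets neither or both), and in each case one checks a circuit is present — typically $D_2$ itself, its "$x_i \leftrightarrow x'$" partner, or $\{x_i,x'\}$. The remaining case, where $D_1, D_2$ both meet $\{x_i,x'\}$ nontrivially but neither is $\{x_i,x'\}$: push both down to circuits of $N$ by replacing $x'$ with $x_i$, apply binarity in $N$, and lift the resulting circuit back up, checking that the lift lands inside $D_1 \triangle D_2$ (adjusting by $\{x_i,x'\}$ when the symmetric difference in $N'$ and the one in $N$ differ only in these two coordinates). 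The converse direction is easier: $N \cong N'_X$ for $X = [n]\setminus\{x'\}$ is a restriction of $N'$, and binarity passes to restrictions (circuits of a restriction are circuits of the ambient matroid contained in $X$), so $N'$ binary forces $N$ binary — alternatively this is exactly the minor-closedness of binary matroids from \cite{Ox}. Assembling: by induction on $|\alpha| = \sum k_i$, using Lemma~\ref{epsilon} to realize $M^\alpha$ as an elementary parallel extension of $M^\beta$ for a smaller $\beta$, the equivalence propagates from $M$ to $M^\alpha$.

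The main obstacle I anticipate is the bookkeeping in the forward direction when both circuits $D_1, D_2$ interact with the duplicated pair $\{x_i, x'\}$: one must carefully track whether $x_i$, $x'$, or both appear in $D_1 \triangle D_2$, and make sure the circuit produced by binarity of $N$ actually sits inside the $N'$-symmetric-difference rather than inside $D_1 \triangle D_2$ together with a spurious copy of $\{x_i,x'\}$ — and when it does pick up such a copy, argue that $\{x_i,x'\}$ itself is the desired circuit instead. This is purely combinatorial case analysis with no conceptual difficulty, but it is where an unguarded argument could go wrong. A cleaner alternative, which I would use if available in the paper's references, is to invoke one of the matrix/cycle-space characterizations of binary matroids from \cite[Theorem 9.1.2]{Ox}: a parallel extension corresponds to repeating a column of a representing matrix over $\FF_2$, so representability over $\FF_2$ is manifestly preserved in both directions, and the whole theorem follows from Theorem~\ref{expan matroid} plus that observation — but since the statement here is phrased via the symmetric-difference-of-circuits definition, I expect the author intends the circuit-theoretic route sketched above.
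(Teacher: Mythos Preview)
Your proposal is correct, but the paper takes a different route. Rather than reducing to a single parallel extension via Lemma~\ref{epsilon} and induction on $\alpha$ (as in Theorem~\ref{graphic}), the paper works with the full expansion $M^\alpha$ at once, using the projection $\pi$ throughout. Its forward case split is organized by (i) whether $\pi(C'_1)=\pi(C'_2)$, in which case a two-element parallel circuit $\{x_{ij},x_{ij'}\}$ sits in the symmetric difference, and (ii) whether $|\pi(C'_k)|=|C'_k|$; if equality holds for both, the $\pi(C'_k)$ are circuits of $M$ and binarity of $M$ applies directly, while if it fails for one $C'_k$ then that $C'_k$ is itself a two-element parallel circuit and an explicit check finishes the job. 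For the converse the paper lifts $C_1,C_2\in\C(M)$ to the ``first copies'' $x_{i1}$ in $M^\alpha$, applies binarity there, and projects back via $\pi$---essentially the restriction argument you give, written out by hand. Your inductive reduction buys modularity (everything rests on the standard circuit description of a parallel extension) and makes the alternative $\FF_2$-representation shortcut immediate; the paper's direct approach avoids the induction and Lemma~\ref{epsilon} altogether, at the cost of a slightly longer case analysis carried out in $M^\alpha$ rather than in a single-step extension.
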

\begin{proof}
Let $M$ be binary and let $C'_1,C'_2\in\C(M^\alpha)$. If $\pi(C'_1)=\pi(C'_2)$ then there exist $x_{ij}\in C'_1\backslash C'_2$ and $x_{ij'}\in C'_2\backslash C'_1$ with $j\neq j'$. Set $C'=\{x_{ij},x_{ij'}\}$. Then $C'\subset C'_1\triangle C'_2$ and $C'\in\C(M^\alpha)$, and hence the assertion is completed. So suppose that $\pi(C'_1)\neq\pi(C'_2)$.

If $|\pi(C'_1)|=|C'_1|$ and $|\pi(C'_2)|=|C_2|$ then since $\pi(C'_1),\pi(C'_2)\in\C(M)$ there exists $C\in\C(M)$ such that $B\subseteq \pi(C'_1)\triangle \pi(C'_2)$. Since $\pi(C'_1)\triangle \pi(C'_2)\subseteq \pi(C'_1\triangle C'_2)$ we have $B\subseteq \pi(C'_1\triangle C'_2)$ and so it follows that $C'\subseteq C'_1\triangle C'_2$ for some $C'\in\C(\M^\alpha)$ with $\pi(C')=C$.

Consider $|\pi(C'_1)|\neq|C'_1|$ or $|\pi(C'_2)|\neq|C'_2|$. Let, for example, $|\pi(C'_1)|\neq|C'_1|$. Then $\{x_{il},x_{im}\}\subset C'_1$ for some $l,m$. Since $\{x_{il},x_{im}\}\in\C(M^\alpha)$, thus it should be $C'_1=\{x_{il},x_{im}\}$. Consider the following cases:
\begin{itemize}
  \item $|\pi(C'_2)|\neq|C'_2|$: Then $C'_2=\{x_{jl'},x_{jm'}\}$ for some $j,l'$ and $m'$. Since $\pi(C'_1)\neq\pi(C'_2)$ we have $i\neq j$. Set $C'=\{x_{il},x_{im}\}$.
  \item $|\pi(C'_2)|=|C'_2|$: Then $C'_2=\{x_{i_1j_1},\ldots,x_{i_rj_r}\}$ and $C'_1\triangle C'_2=\{x_{i_1j_1},\ldots,x_{i_rj_r},x_{il},x_{im}\}$ or $C'_1\triangle C'_2=\{x_{i_1j_1},\ldots,x_{i_{r-1}j_{r-1}},x_{il}\}$ where $x_{i_rj_r}=x_{im}$. At the first case, set $C'=\{x_{il},x_{im}\}$ and at the second one, set $C'=\{x_{i_1j_1},\ldots,x_{i_{r-1}j_{r-1}},x_{il}\}$.
\end{itemize}
Thus $C'\in\C(M^\alpha)$ and $C'\subset C'_1\triangle C'_2$. Therefore $M^\alpha$ is binary.

Conversely, suppose that $M^\alpha$ is binary and $C_1,C_2\in\C(M)$. Let $C_1=\{x_{i_1},\ldots,x_{i_r}\}$ and $C_2=\{x_{j_1},\ldots,x_{j_s}\}$. Set $C'_1=\{x_{i_11},\ldots,x_{i_r1}\}$ and $C'_2=\{x_{j_11},\ldots,x_{j_s1}\}$. Then $C'_1,C'_2\in\C(M^\alpha)$ and so there exists $C'\in\C(M^\alpha)$ with $C'\subset C'_1\triangle C'_2$. It follows that $\pi(C')\in\C(M)$ and $\pi(C')\subset C_1\triangle C_2$. Therefore $M$ is binary.
\end{proof}

We recall from \cite[page 46.]{Ox} the definition of a transversal matroid. Let $S\subset [n]$. A \emph{set system} $(S,\A)$ is a set $S$ along with a multiset $\A=(A_j:j\in J)$ of (not necessarily distinct) subsets of $S$. If $J=\{1,\ldots,m\}$ then we may denote $\A$ by $\A=(A_1,\ldots,A_m)$. A \emph{transversal} of $\A=(A_j:j\in J)$ is a subset $T$ of $S$ for which there is a bijection $\varphi:J\rightarrow T$ with $\varphi(j)=A_j$ for all $j\in J$.

\begin{thm}\label{Hall}
\cite{Ha} A finite set system $(S, (A_j:j\in J))$ has a transversal if and only if, for all $K\subset J$,
$$|\bigcup A_i|\geq |K|.$$
\end{thm}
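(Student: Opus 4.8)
The statement to be proved is Hall's theorem on the existence of a transversal (system of distinct representatives) for a finite set system. The plan is to prove it by induction on $|J|$, treating the "hard" direction (sufficiency of Hall's condition $|\bigcup_{i\in K}A_i|\geq|K|$ for all $K\subseteq J$); the necessity direction is immediate since a transversal picks $|K|$ distinct elements from $\bigcup_{i\in K}A_i$.

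For the sufficiency, first I would dispose of the base case $|J|=1$: Hall's condition gives $|A_1|\geq 1$, so pick any element of $A_1$. For the inductive step with $|J|=m>1$, split into two cases according to whether the condition holds with slack. \emph{Case 1:} for every nonempty proper subset $K\subsetneq J$ one has $|\bigcup_{i\in K}A_i|\geq|K|+1$ (strict slack). Then choose any index $j_0\in J$ and any element $s\in A_{j_0}$, and delete $j_0$ from $J$ and $s$ from all remaining sets, forming $A_j'=A_j\setminus\{s\}$ for $j\neq j_0$. For any $K\subseteq J\setminus\{j_0\}$ we have $|\bigcup_{i\in K}A_i'|\geq|\bigcup_{i\in K}A_i|-1\geq|K|$, so Hall's condition still holds and by induction the reduced system has a transversal, to which we adjoin $s$. \emph{Case 2:} there exists a nonempty proper subset $K_0\subsetneq J$ with $|\bigcup_{i\in K_0}A_i|=|K_0|$ exactly. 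Write $U=\bigcup_{i\in K_0}A_i$. The subsystem $(A_i:i\in K_0)$ satisfies Hall's condition (inherited) and has fewer than $m$ indices, so by induction it has a transversal $T_0$, which necessarily equals $U$ since $|T_0|=|K_0|=|U|$. Now consider the remaining indices $J\setminus K_0$ with sets $A_j''=A_j\setminus U$. I claim this reduced system satisfies Hall's condition: for $L\subseteq J\setminus K_0$, $|\bigcup_{i\in L}A_i''|=|\bigcup_{i\in L\cup K_0}A_i|-|U|\geq|L\cup K_0|-|K_0|=|L|$, using Hall's condition on $L\cup K_0$ and the fact that $U\subseteq\bigcup_{i\in L\cup K_0}A_i$. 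By induction this system has a transversal $T_1$, and $T_0\cup T_1$ is the desired transversal of the whole system since $T_0=U$ and $T_1\cap U=\emptyset$.

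The main obstacle, and the point requiring care, is Case 2: one must verify that removing the "tight" block $K_0$ together with its element set $U$ leaves a system still satisfying Hall's condition, and that the two partial transversals are disjoint. Both hinge on the equality $|\bigcup_{i\in K_0}A_i|=|K_0|$ forcing the transversal of the block to be all of $U$; the disjointness $T_1\cap U=\emptyset$ is then automatic from the definition $A_j''=A_j\setminus U$. Everything else is bookkeeping on cardinalities of unions. (I should also note the degenerate possibility that $J=\emptyset$, where the empty transversal works trivially, or simply assume $J\neq\emptyset$ as the statement implicitly does when a transversal is sought.)
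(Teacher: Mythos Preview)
Your proof is correct and is the standard inductive argument for Hall's marriage theorem (essentially the Halmos--Vaughan proof). Note, however, that the paper does not supply its own proof of Theorem~\ref{Hall}: the result is simply quoted from \cite{Ha} as a classical fact needed later in the proof of Theorem~\ref{transversal}. So there is no ``paper's proof'' to compare against; your write-up would serve as a self-contained replacement for the bare citation if one wanted the paper to be more self-contained.
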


If $X\subset S$, then $X$ is a \emph{partial transversal} of $\A=(A_j:j\in J)$ if, for some subset $K$ of $J$, $X$ is a transversal of $\A$. The partial transversals of a $\A$ are the independent sets of a matroid. We call such a matroid a \emph{transversal matroid} and denote it by $M[\A]$. $\A$ is called a \emph{presentation} of $M[\A]$.

\begin{thm}\label{restric trans}
\cite{Br} Let $M$ be a transversal matroid on $[n]$. Then so is $M_X$ for each $X\subset [n]$. If $(A_1,\ldots,A_m)$ is a presentation of $M$, then $(A_1\cap X,\ldots,A_m\cap X)$ is a presentation of $M_X$.
\end{thm}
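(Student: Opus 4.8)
The plan is to show that the set system $\A_X:=(A_1\cap X,\ldots,A_m\cap X)$, regarded as a set system on $X$, presents exactly the restriction $M_X$; the transversality of $M_X$ is then immediate. Since a matroid is determined by its ground set together with its family of independent sets, and since $M_X$ and $M[\A_X]$ both have ground set $\E_M\cap X=X$ (here $\E_M=[n]$, as $M$ is a matroid on $[n]$), it suffices to verify that a subset $T\subseteq X$ is a partial transversal of $\A_X$ if and only if $T$ is independent in $M$. Recall that, by definition of a transversal matroid, ``$T$ independent in $M=M[(A_1,\ldots,A_m)]$'' means exactly that there are $K\subseteq\{1,\ldots,m\}$ and an injection $\varphi\colon K\to\E_M$ with $\varphi(j)\in A_j$ for all $j\in K$ and $\varphi(K)=T$.

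First I would handle the forward direction: if $T\subseteq X$ is a partial transversal of $\A_X$, witnessed by $K$ and $\varphi\colon K\to X$ with $\varphi(j)\in A_j\cap X$, then because $A_j\cap X\subseteq A_j$ the very same $\varphi$ exhibits $T$ as a partial transversal of $(A_1,\ldots,A_m)$, so $T$ is independent in $M$. Then, conversely, if $T\subseteq X$ is independent in $M$, I would pick $K$ and $\varphi\colon K\to\E_M$ with $\varphi(j)\in A_j$ and $\varphi(K)=T$ and observe that $\varphi(j)\in T\subseteq X$ forces $\varphi(j)\in A_j\cap X$ for every $j$, so $\varphi$ now witnesses that $T$ is a partial transversal of $\A_X$. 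This gives the equivalence, hence $M_X=M[\A_X]$, and in particular $M_X$ is transversal with presentation $(A_1\cap X,\ldots,A_m\cap X)$. To recover the basis formulation used in Section 1, one notes that the bases of $M_X$ — the maximal independent subsets of $X$ — are precisely the maximal elements of $\{B\cap X:B\in\B_M\}$, since every independent subset of $X$ extends to a basis of $M$.

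The argument carries essentially no analytic content: in particular Hall's criterion (Theorem \ref{Hall}) is not needed, everything being done with explicit injections, so the equivalence of the two families of partial transversals is the whole point. The only place where I expect care to be required is the bookkeeping of ground sets. An element of $X$ lying in no $A_j$ is a loop of $M$ and must remain a loop of $M_X$; this is exactly why $\A_X$ must be taken as a set system on all of $X$ — equivalently, the possibly empty sets $A_j\cap X$ are kept in the presentation — rather than on $\bigcup_{j}(A_j\cap X)$, since otherwise the two matroids would only agree after deleting loops.
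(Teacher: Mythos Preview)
Your argument is correct and is the standard one: a subset $T\subseteq X$ is a partial transversal of $(A_1\cap X,\ldots,A_m\cap X)$ if and only if it is a partial transversal of $(A_1,\ldots,A_m)$, simply because any system of distinct representatives for $T$ already takes values in $T\subseteq X$. The paper does not supply its own proof of this theorem; it is quoted from \cite{Br} and used as a black box, so there is nothing to compare your approach against.
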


If $\A=(A_j:j\in J)$ is a family of subsets of $S\subset [n]$ then the bipartite graph associated with $\A$, denoted by $G[\A]$, has the vertex set $S\cup\{A_j:j\in J\}$ and the edge set $\{x_iA_j:j\in J\ \mbox{and}\ x_i\in A_j\}$.

A matching in a graph $G$ is the set of edges in $G$ no two of which have a common endpoint. A subset $X$ of $S$ is a partial transversal of $\A$ if and only if there is a matching in $G[\A]$ which every edge has one endpoint in $X$.

\begin{thm}\label{transversal}
Let $M$ be a matroid on $[n]$ and let $\alpha\in\NN^n$. Then $M$ is transversal if and only if $M^\alpha$ is.
\end{thm}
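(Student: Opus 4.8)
The plan is to mimic the structure of the proofs of Theorems \ref{graphic}, \ref{binary}: prove one direction by induction on $\alpha$, reducing to the elementary case of adding a single parallel element, and the other direction by restriction to a transversal of the expansion back onto a copy of $[n]$. Concretely, for the ``only if'' direction, suppose $M = M[\A]$ with $\A = (A_1,\ldots,A_m)$ a presentation on $[n]$. Using Lemma \ref{epsilon} it suffices to treat the step $\beta \to \alpha = \beta + \delta_i$, i.e.\ to pass from $M^\beta$ to $(M^\beta)^{\1+\varepsilon_{ik_i}}$, which is the expansion that duplicates the single element $x_{ik_i}$ into a parallel pair $x_{ik_i1}, x_{ik_i2}$. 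So the real content is: if $N = M[\B]$ is transversal on a ground set $T$ and $N'$ is obtained from $N$ by adding to some element $e \in T$ a new element $e'$ parallel to it, then $N'$ is transversal. I would prove this directly from a presentation: if $\B = (B_1,\ldots,B_m)$ presents $N$, set $B_j' = B_j \cup \{e'\}$ whenever $e \in B_j$ and $B_j' = B_j$ otherwise; then I claim $\B' = (B_1',\ldots,B_m')$ on $T \cup \{e'\}$ presents $N'$. The verification is a matching argument in the bipartite graph $G[\B']$: a subset $X$ of $T \cup \{e'\}$ is a partial transversal of $\B'$ iff it can be saturated by a matching, and since $e$ and $e'$ have exactly the same neighbourhoods in $G[\B']$, a matching saturating $X$ can always be rerouted so that the bases of $N'$ are exactly $\{X : X$ independent in $N$, and $X$ does not contain both $e,e'$ unless...$\}$ — more precisely the independent sets of $M[\B']$ are precisely the sets that become independent in $N$ after identifying $e'$ with $e$, which is exactly the definition of the parallel extension, hence of the expansion.

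For the ``if'' direction I would argue exactly as in Theorem \ref{graphic}: set $X = \{x_{i1} : 1 \le i \le n\} \subseteq [n]^\alpha$, a transversal of the parallel classes. Then $(M^\alpha)_X \cong M$ by construction of the expansion, and by Theorem \ref{restric trans} a restriction of a transversal matroid is transversal; so if $M^\alpha$ is transversal then $M$ is. This direction is essentially immediate given the results already quoted.

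The base case of the induction (going from $\alpha = \1$ to duplicating one element once) is the same parallel-extension statement, so the whole ``only if'' direction collapses to that one lemma. I expect the main obstacle to be making the matching/rerouting argument for the parallel extension fully rigorous — specifically, checking that with the modified presentation $\B'$ the partial transversals are \emph{exactly} $\{Y \subseteq T\cup\{e'\} : \pi'(Y)$ is a partial transversal of $\B$ and $Y$ meets $\{e,e'\}$ in at most one element, together with the ``independent'' sets containing at most one of the pair$\}$, i.e.\ that no unwanted independent set of size using both $e$ and $e'$ sneaks in (it cannot, since $e,e'$ have identical neighbourhoods, so any matching uses at most... no: a matching could saturate both $e$ and $e'$ using two different $B_j'$s). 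The point to get right is that adding $e'$ to exactly those $B_j$ containing $e$ ensures that $\{e,e'\}$ is a circuit of $M[\B']$ — i.e.\ $\{e,e'\}$ is dependent — because any matching saturating both would need two sets $B_j', B_k'$ each adjacent to both, which forces a matching of $\{e\}$ alone in $N$ extendable in two ways, contradicting nothing about independence of $\{e,e'\}$ directly; the cleaner route is to observe $\{e,e'\}$ is dependent in $M[\B']$ precisely because every $B_j'$ containing $e'$ also contains $e$, so in $G[\B']$ the vertices $e,e'$ have the same neighbours, and Hall's condition (Theorem \ref{Hall}) fails for $\{e,e'\}$ only if... — here I would instead simply invoke that $M[\B']$ has $e$ and $e'$ parallel because they lie in exactly the same sets of the presentation, a standard fact, and that a matroid with a specified element doubled into a parallel pair is by definition the expansion $N^{\1+\varepsilon}$. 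Thus the entire argument is: parallel extension preserves transversality (matching lemma) $\Rightarrow$ induction via Lemma \ref{epsilon} gives ``only if''; restriction preserves transversality (Theorem \ref{restric trans}) gives ``if''.
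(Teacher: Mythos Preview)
Your ``if'' direction is correct and identical to the paper's: restrict $M^\alpha$ to $X=\{x_{i1}:1\le i\le n\}$ and apply Theorem~\ref{restric trans}.

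For ``only if'' you take a different route from the paper --- you induct on $\alpha$ via Lemma~\ref{epsilon} and reduce to a single parallel extension, whereas the paper argues directly that $\A^\alpha=(A_1^\alpha,\ldots,A_m^\alpha)$ is a presentation of $M^\alpha$ --- but your argument has a genuine gap at precisely the point you yourself flagged and then dismissed. The assertion that ``$e$ and $e'$ are parallel in $M[\B']$ because they lie in exactly the same sets of the presentation'' is false: if $e$ lies in two sets $B_j,B_k$ of the original presentation, then in $\B'$ both $e$ and $e'$ lie in $B_j'$ and in $B_k'$, and the matching $e\mapsto B_j'$, $e'\mapsto B_k'$ exhibits $\{e,e'\}$ as a partial transversal, hence independent. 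Concretely, take $N=U_{2,2}$ on $\{e,f\}$ with presentation $\B=(\{e,f\},\{e,f\})$; your $\B'=(\{e,e',f\},\{e,e',f\})$ presents $U_{2,3}$, not the parallel extension of $N$ at $e$. The repair is to first pass to a presentation in which $e$ lies in exactly one set (for $U_{2,2}$ one can use $(\{e\},\{f\})$, and then adding $e'$ to that single set does give the parallel extension); that such a presentation always exists for an arbitrary non-loop $e$ is the missing lemma, and it is not immediate. Incidentally, the paper's direct argument runs into the same obstruction: with the presentation above and $\alpha=(2,1)$ one gets $M[\A^\alpha]=U_{2,3}\neq (M[\A])^\alpha$, so the reverse inclusion $\B_{M[\A^\alpha]}\subseteq\B_{M[\A]^\alpha}$ that the paper leaves to ``a similar argument'' already fails for this choice of $\A$.
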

\begin{proof}
``Only if part'': Let $M$ be a transversal matroid with $M\cong M[\A]$ where $\A=(A_1,A_2,\ldots,A_m)$ and $A_i\subset [n]$. Set $\A^{\alpha}=(A^{\alpha}_1,A^{\alpha}_2,\ldots,A^{\alpha}_m)$. We claim that $\A^\alpha$ is a presentation of $M^\alpha$. We associate to $A_j$ and $A^\alpha_j$, respectively, the vertices $y_j$ and $y'_j$.

Let $B'\in\B_{M[\A]^\alpha}$. We may assume that $B'=\{x_{1i_1},\ldots,x_{ri_r}\}$. So there exists the maximal matching $\{x_1y_{j_1},\ldots,x_ry_{j_r}\}$ in the bipartite graph $G[\A]$ with the partition $[n]\dot{\cup}\{y_1,\ldots,y_m\}$. It is clear that $B''=\{x_{1i_1}y'_{j_1},\ldots,x_{ri_r}y'_{j_r}\}$ is a matching in $G[\A^\alpha]$. Suppose, on the contrary, that $B''$ is not maximal. So for some matching $C$ in $G[\A^\alpha]$ we have $B''\subset C$. Let $x_{st}y'_{l}\in C\backslash B''$. Since $\{x_1y_{j_1},\ldots,x_ry_{j_r}\}$ is maximal, we have $s\in\{1,\ldots,r\}$. Moreover, it is clear that $l\not\in\{j_1,\ldots,j_r\}$. Let $s=1$ and let $X=\{x_1,\ldots,x_r\}$. By Theorem \ref{restric trans}, $M_X$ is transversal with presentation $(A_1\cap X,\ldots,A_m\cap X)$. But $|\cup_{k\in\{j_1,\ldots,j_r,l\}}(A_k\cap X)|=r<r+1=|\{j_1,\ldots,j_r,l\}|$. This contradicts Theorem \ref{Hall}. Therefore $B''$ is a maximal matching in $G[\A^\alpha]$ and so $B'\in\B_{M[\A^\alpha]}$.

In a similar argument we show that $\B_{M[\A^\alpha]}\subseteq \B_{M[\A]^\alpha}$. Therefore $M[\A]^\alpha\cong M[\A^\alpha]$, as desired.

``If part'': Let $M^\alpha$ be transversal with the presentation $\B=(B_1,\ldots,B_m)$. One may suppose that $[n]:=\{x_{i1}:1\leq i\leq n\}$. By Theorem \ref{restric trans}, $(M^\alpha)_{[n]}$ is a transversal matroid with the presentation $\B'=(B_1\cap [n],\ldots,B_m\cap [n])$. Since $(M^\alpha)_{[n]}=M$, the assertion is completed.

\end{proof}

\begin{exam}
Let $S=\{x_1,x_2,x_3,x_4\}\subset [n]$ and $\alpha=(k_1,\ldots,k_n)\in\NN^n$. Let $\A=(\{x_1,x_2\},\{x_1,x_3\},\{x_3\})$. Then the matchings in $G[\A]$ are
$$\{x_1y_2,x_2y_1\},\{x_1y_1,x_3y_2\},\{x_2y_1,x_3y_2\}$$
and so
$$\B_{M[\A]}=\{\{x_1,x_2\},\{x_1,x_3\},\{x_2,x_3\}\}.$$
$G[\A]$ and $G[\A^\alpha]$ are shown in Figure \ref{1}.
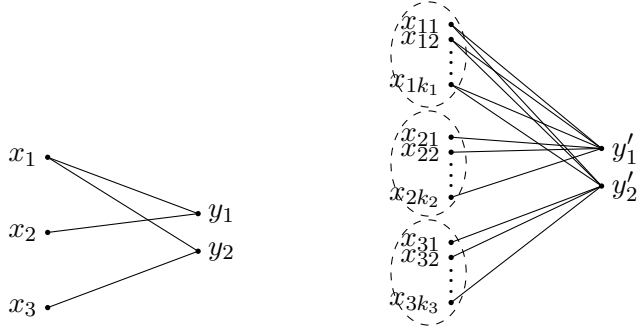
\begin{figure}
$$\begin{array}{ccccc}
\begin{tikzpicture}
\coordinate (x1) at (0,3);\fill (0,3) circle (1pt);
\node[left] at (0,3)    {$x_1$};
\coordinate (x2) at (0,2);\fill (0,2) circle (1pt);
\node[left] at (0,2)    {$x_2$};
%\node[above] at (0,1)    {$\vdots$};
\coordinate (xn) at (0,1);\fill (0,1) circle (1pt);
\node[left] at (0,1)    {$x_3$};
\coordinate (A1) at (2,2.25);\fill (2,2.25) circle (1pt);
\node[right] at (2,2.25)    {$y_1$};
\coordinate (A2) at (2,1.75);\fill (2,1.75) circle (1pt);
\node[right] at (2,1.75)    {$y_2$};
%\node[above] at (2,1)    {$\vdots$};
%\coordinate (Am) at (2,0.75);\fill (2,0.75) circle (1pt);
%\node[right] at (2,0.75)    {$y_m$};
\draw (x1) -- (A1);
\draw (x1) -- (A2);
\draw (x2) -- (A1);
\draw (xn) -- (A2);
%\draw (xn) -- (Am);
%\draw (x1) -- (A1);
\end{tikzpicture}
& \ & \ & \ &
\begin{tikzpicture}
\coordinate (x11) at (0,3.4);\fill (0,3.4) circle (1pt);
\node[left] at (0,3.4)    {$x_{11}$};
\coordinate (x12) at (0,3.2);\fill (0,3.2) circle (1pt);
\node[left] at (0,3.2)    {$x_{12}$};
\node at (0,3)    {$\vdots$};
\coordinate (x1k1) at (0,2.6);\fill (0,2.6) circle (1pt);
\node[left] at (0,2.6)    {$x_{1k_1}$};
\draw[style=dashed]  (-0.3,3) ellipse (0.5 and 0.7);
\coordinate (x21) at (0,1.9);\fill (0,1.9) circle (1pt);
\node[left] at (0,1.9)    {$x_{21}$};
\coordinate (x22) at (0,1.7);\fill (0,1.7) circle (1pt);
\node[left] at (0,1.7)    {$x_{22}$};
\node at (0,1.5)    {$\vdots$};
\coordinate (x2k2) at (0,1.1);\fill (0,1.1) circle (1pt);
\node[left] at (0,1.1)    {$x_{2k_2}$};
\draw[style=dashed]  (-0.3,1.55) ellipse (0.5 and 0.7);
\coordinate (xn1) at (0,0.5);\fill (0,0.5) circle (1pt);
\node[left] at (0,0.5)    {$x_{31}$};
\coordinate (xn2) at (0,0.3);\fill (0,0.3) circle (1pt);
\node[left] at (0,0.3)    {$x_{32}$};
\node at (0,0.1)    {$\vdots$};
\coordinate (xnkn) at (0,-0.3);\fill (0,-0.3) circle (1pt);
\node[left] at (0,-0.3)    {$x_{3k_3}$};
\draw[style=dashed]  (-0.3,0.1) ellipse (0.5 and 0.7);

\coordinate (A1) at (2,1.75);\fill (2,1.75) circle (1pt);
\node[right] at (2,1.75)    {$y'_1$};
\coordinate (A2) at (2,1.25);\fill (2,1.25) circle (1pt);
\node[right] at (2,1.25)    {$y'_2$};
%\node[above] at (2,0.5)    {$\vdots$};
%\coordinate (Am) at (2,0.25);\fill (2,0.25) circle (1pt);
%\node[right] at (2,0.25)    {$y'_m$};

\draw (x11) -- (A1);
\draw (x12) -- (A1);
\draw (x1k1) -- (A1);
\draw (x11) -- (A2);
\draw (x12) -- (A2);
\draw (x1k1) -- (A2);
\draw (x21) -- (A1);
\draw (x22) -- (A1);
\draw (x2k2) -- (A1);
\draw (xn1) -- (A2);
\draw (xn2) -- (A2);
\draw (xnkn) -- (A2);
\end{tikzpicture}
\end{array}$$
  \caption{The bipartite graphs $G[\A]$ and $G[\A^\alpha]$}\label{1}
\end{figure}

\end{exam}

A matroid on $[n]$ of rank $t\leq n$ is an \emph{uniform} matroid if all $t$-element subsets of $[n]$ are bases and it is denoted by $U_{t,n}$.

A \emph{partition matroid} of rank $t$ \cite{KaOkUn} is a matroid $M(\P)$ associated with a partition $\P=\{A_1,\ldots,A_m\}$ of $\E_{M(\P)}$ and the basis set
$$\B_{M(\P)}=\{U\subset A:|U\cap A_i|\leq 1\ \mbox{for all}\ i\ \mbox{and}\ |U|=t\}.$$

\begin{thm}\label{partition uniform}
The expansion of every uniform matroid is a partition matroid. Conversely, every partition matroid is the expansion of an uniform matroid.
\end{thm}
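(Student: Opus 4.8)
The plan is to prove the two directions separately and explicitly identify the relevant partition in each case. First I would start with a uniform matroid $U_{t,n}$ on $[n]$ and an arbitrary exponent vector $\alpha=(k_1,\ldots,k_n)\in\NN^n$. The ground set of the expansion is $[n]^\alpha=\{x_{ij}:1\le i\le n,\ 1\le j\le k_i\}$, which is naturally partitioned into the ``fibers'' $A_i=\{x_{i1},\ldots,x_{ik_i}\}$ for $i=1,\ldots,n$. The claim is that $\P=\{A_1,\ldots,A_n\}$ exhibits $U_{t,n}^\alpha$ as the partition matroid $M(\P)$ of rank $t$. By definition $\B_{U_{t,n}}$ consists of all $t$-subsets of $[n]$, and from the definition of expansion of a family, $\{A\}^\alpha$ for a $t$-set $A=\{x_{i_1},\ldots,x_{i_t}\}$ consists precisely of all sets $\{x_{i_1j_1},\ldots,x_{i_tj_t}\}$ picking one element from each of the fibers over $i_1,\ldots,i_t$. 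Taking the union over all $t$-subsets $A$, one obtains exactly those subsets of $[n]^\alpha$ of size $t$ meeting each $A_i$ in at most one element — that is, $\B_{M(\P)}$. I would just spell out this set equality in both inclusions; it is essentially bookkeeping once the fiber partition is named.

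For the converse, I would take an arbitrary partition matroid $M(\P)$ with $\P=\{A_1,\ldots,A_m\}$ a partition of its ground set and rank $t\le m$. Set $k_i=|A_i|$ and $\alpha=(k_1,\ldots,k_m)\in\NN^m$. Then $[m]^\alpha$ is in bijection with $\E_{M(\P)}$ by sending $x_{ij}$ to the $j$-th element of $A_i$ (after fixing any labeling of each block), and under this identification the fiber over $i$ is exactly $A_i$. Now apply the first part of the theorem, or rather redo the same computation: $U_{t,m}^\alpha$ has as bases precisely the $t$-subsets of $[m]^\alpha$ meeting each fiber in at most one point, which transports to $\B_{M(\P)}$. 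Hence $M(\P)\cong U_{t,m}^\alpha$, so every partition matroid is the expansion of a uniform matroid. One should also note $U_{t,m}$ is genuinely a matroid (it is the standard example), so Theorem \ref{expan matroid} is not even needed here, though it reassures us the expansion is a matroid.

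The only mild subtlety — and the step I would be most careful about — is the degenerate behavior when some $k_i=0$: in that edge case the block $A_i$ is empty and simply does not occur in the partition, and on the expansion side the index $i$ contributes no element to $[n]^\alpha$ and cannot appear in any member of $\{A\}^\alpha$ for $A\ni x_i$ unless... actually one should check the definition of $\{A\}^\alpha$ still produces the intended sets when a chosen index has $k_i=0$. The cleanest route is to assume $\alpha\in\NN^n$ with all $k_i\ge 1$ (consistent with how $\1$ and the inductive arguments earlier in the paper are used), in which case no degeneracy arises and the fiber partition has exactly $n$ nonempty blocks. Modulo that convention, the proof is a direct unwinding of the two definitions, with the fiber partition $\{A_1,\ldots,A_n\}$ as the bridge in both directions.
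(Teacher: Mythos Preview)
Your proposal is correct and follows essentially the same approach as the paper: in both directions you use the fiber partition $A_i=\{x_{i1},\ldots,x_{ik_i}\}$ to identify $U_{t,n}^\alpha$ with $M(\P)$, and for the converse you take $\alpha=(|A_1|,\ldots,|A_m|)$ to realize $M(\P)$ as $U_{t,m}^\alpha$. The paper's proof is terser and omits both the explicit verification of the basis-set equality and the discussion of the $k_i=0$ edge case, but the underlying argument is the same.
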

\begin{proof}
Let $U_{t,n}$ be an uniform matroid on $[n]$ of rank $t$ and let $\alpha=(k_1,\ldots,k_n)\in\NN^n$. Set $A_i=\{x_{i1},\ldots,x_{ik_i}\}$ for all $i$. Then it is easy to see that $U^\alpha_{t,n}=M(\P)$ where $\P=\{A_1,\ldots,A_n\}$.

Conversely, let $M(\P)$ be a partition matroid of rank $t$ with $\P=\{A_1,\ldots,A_m\}$. Set $A'_i:=\{x_i\}$ for all $i$. Then $M(\P)\cong U^\alpha_{t,m}$ where $\alpha=(k_1,\ldots,k_m)$ and $k_i=|A_i|$ for all $i$.
\end{proof}
%\vspace{0.2in}\\

\section{The contraction functor}
%\vspace{0.2in}\\
\begin{defn}
Let $\A$ be a family of subsets of $[n]$ and let $\B$ denote the maximal elements of $\A$ (with respect to inclusion). We define the relation $``\sim$'' on $[n]$ in the following form:
$$x_i\sim x_j \Longleftrightarrow \{A\backslash x_i: A\in\B, x_i\in A\}=\{A\backslash x_j: A\in\B, x_j\in A\}.$$
In other words,
$$x_i\sim x_j \Longleftrightarrow \mbox{for all}\ A\in\B \left\{
                                    \begin{array}{ll}
                                      \mbox{if}\ x_i\in A\ \mbox{then}\ (A\backslash x_i)\cup x_j\in\B \\
                                      \mbox{if}\ x_j\in A\ \mbox{then}\ (A\backslash x_j)\cup x_i\in\B.
                                    \end{array}
                                  \right.
$$

It is easily shown that $\sim$ is an equivalence relation. Let $[m]=\{y_1,\ldots,y_m\}$ be the set of equivalence classes under $\sim$.

Let $y_i=\{x_{i1},\ldots,x_{ia_i}\}$. Set $\alpha=(a_1,\ldots,a_m)$. For $A\in\B$, define $\overline{A}=\{y_i:y_i\cap A\neq\emptyset\}$ and $\overline{\A}$ a family of subsets of $[m]$ with the set $\{\overline{A}:A\in\B\}$ of maximal elements. We call $\overline{\A}$ the \emph{contraction of $\A$ by $\alpha$}. Clearly, every family of subsets of $[n]$ has an unique contraction.

A family $\A$ of subsets of $[n]$ is called \emph{contracted} if $\A$ and $\overline{\A}$ coincide up to a relabeling.
\end{defn}

\begin{rem}\label{expan contract}
Note that the contraction functor behaves exactly the opposite to expansion functor. Actually, if $\A$ is a family of subsets of $[n]$ and $\overline{\A}$ is the contraction of $\A$ by $\alpha$, then $(\overline{\A})^\alpha$ and $\A$ coincide up to a relabeling of $[n]$. Also, for every $\alpha\in\NN^n$, two families $\overline{\A^\alpha}$ and $\overline{\A}$ coincide. Therefore $\A$ is a matroid if and only if $(\overline{\A})^\alpha$ is a matroid. Equivalently, by Theorem \ref{expan matroid}, $\overline{\A}$ is a matroid.
\end{rem}

\begin{rem}\label{uniform contracted}
All of uniform matroids of rank $t>1$ are contracted.
\end{rem}

\begin{cor}\label{cont part unif}
The contraction of a partition matroid is an uniform matroid.
\end{cor}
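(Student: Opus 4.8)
The plan is to combine the two preceding results on expansion and contraction. Recall that by Theorem~\ref{partition uniform}, every partition matroid $M(\P)$ with $\P=\{A_1,\ldots,A_m\}$ is the expansion $U_{t,m}^\alpha$ of the uniform matroid $U_{t,m}$, where $\alpha=(k_1,\ldots,k_m)$ with $k_i=|A_i|$. On the other hand, Remark~\ref{expan contract} tells us that for any $\alpha$, the contraction $\overline{\A^\alpha}$ and $\overline{\A}$ coincide. Applying this with $\A=U_{t,m}$ gives $\overline{M(\P)}=\overline{U_{t,m}^\alpha}=\overline{U_{t,m}}$. Finally, by Remark~\ref{uniform contracted}, every uniform matroid of rank $t>1$ is contracted, so $\overline{U_{t,m}}=U_{t,m}$ up to relabeling. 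Stringing these together yields $\overline{M(\P)}=U_{t,m}$, which is the assertion.

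So the proof is essentially a three-line citation chain: write $M(\P)\cong U_{t,m}^\alpha$ by Theorem~\ref{partition uniform}; invoke Remark~\ref{expan contract} to get $\overline{M(\P)}=\overline{U_{t,m}}$; and invoke Remark~\ref{uniform contracted} (together with the observation that a contracted family equals its own contraction up to relabeling) to conclude $\overline{U_{t,m}}=U_{t,m}$.

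The only genuine subtlety — and the step I would be most careful about — is the boundary case $t=1$. If $t=1$, then $U_{1,m}$ has basis set consisting of all singletons, and under the equivalence relation $\sim$ of the contraction definition, $x_i\sim x_j$ for all $i,j$ (each singleton $\{x_i\}$ minus $x_i$ is $\emptyset$, so the condition is vacuously matched), collapsing everything into a single class; thus $\overline{U_{1,m}}=U_{1,1}$ rather than $U_{1,m}$, and the partition matroid of rank $1$ contracts to $U_{1,1}$. Hence the statement as phrased is accurate only with the understanding that ``uniform matroid'' is read up to this trivial reduction, or one should tacitly assume $t>1$ as in Remark~\ref{uniform contracted}. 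I would either state the corollary for $t>1$ or remark explicitly that the rank-$1$ case contracts to $U_{1,1}$, which is still a uniform matroid, so the statement holds verbatim. No heavy computation is needed anywhere; the work is entirely in assembling the cited facts and handling this degenerate case cleanly.
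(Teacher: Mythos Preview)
Your proof is correct and follows exactly the same citation chain as the paper's own argument: Theorem~\ref{partition uniform}, then Remark~\ref{expan contract}, then Remark~\ref{uniform contracted}. Your treatment of the rank-$1$ boundary case is in fact more careful than the paper's, which silently relies on Remark~\ref{uniform contracted} (stated only for $t>1$) without commenting on the degenerate situation.
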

\begin{proof}
Let $M$ be a partition matroid on $[n]$. By Theorem \ref{partition uniform}, $M$ is the expansion of an uniform matroid $U_{t,n}$ with respect to some $\alpha\in\NN^n$. It follows from Remarks \ref{expan contract} and \ref{uniform contracted} that $\overline{M}=\overline{U^\alpha_{t,n}}=\overline{U_{t,n}}=U_{t,n}$.
\end{proof}

In view of Theorems \ref{graphic}, \ref{binary}, \ref{transversal} and \ref{partition uniform} we have the following:

\begin{cor}
The contraction of a graphic (resp. binary, transversal) matroid is graphic (resp. binary, transversal).
\end{cor}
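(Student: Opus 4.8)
The plan is to reduce the statement to the three equivalences already established --- Theorems \ref{graphic}, \ref{binary} and \ref{transversal} --- together with the observation in Remark \ref{expan contract} that contraction is a one-sided inverse of expansion. Concretely, let $M$ be a matroid on $[n]$ and let $\overline{M}$ be its contraction, say by $\alpha\in\NN^m$, so that $\overline{M}$ is a family of subsets of $[m]$. By the last two sentences of Remark \ref{expan contract}, $\overline{M}$ is again a matroid and $(\overline{M})^\alpha$ coincides with $M$ up to a relabeling of the ground set; in particular $(\overline{M})^\alpha\cong M$.

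Now suppose $M$ is graphic. Then $(\overline{M})^\alpha\cong M$ is graphic, and applying Theorem \ref{graphic} to the matroid $N:=\overline{M}$ and the vector $\alpha$ --- specifically the contrapositive of the ``only if'' direction, i.e.\ that $N^\alpha$ graphic forces $N$ graphic --- we conclude that $\overline{M}$ is graphic. The arguments for the binary and the transversal cases are word for word the same, using Theorem \ref{binary} and Theorem \ref{transversal} respectively in place of Theorem \ref{graphic}. So I would simply state the proof once and remark that it applies verbatim to all three properties.

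The only point that needs a word of care is that Theorems \ref{graphic}, \ref{binary} and \ref{transversal} are phrased for matroids, so one must first know that $\overline{M}$ is a matroid before invoking them; this is precisely what the final sentence of Remark \ref{expan contract} supplies (via Theorem \ref{expan matroid}). Beyond that bookkeeping there is no real obstacle here: the corollary is a purely formal consequence of the fact that each of the three properties is both preserved and reflected by the expansion functor, combined with the fact that $M$ is --- up to relabeling --- the $\alpha$-expansion of its own contraction $\overline{M}$.
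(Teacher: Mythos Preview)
Your argument is correct and is exactly the one the paper has in mind: the corollary is stated without proof, prefaced only by ``In view of Theorems \ref{graphic}, \ref{binary}, \ref{transversal} \ldots'', and your write-up simply spells out that deduction via Remark \ref{expan contract}. One small wording slip: the implication ``$N^\alpha$ graphic $\Rightarrow$ $N$ graphic'' is the \emph{``if'' direction} of Theorem \ref{graphic}, not the contrapositive of the ``only if'' direction (the latter would read ``$N^\alpha$ not graphic $\Rightarrow$ $N$ not graphic''); the logic is unaffected since you are invoking a biconditional anyway.
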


\noindent{\bf 4. Unique exchange property for bases with a view towards White's conjecture}

In this section, we investigate the preservation of White's conjecture under taking the expansion and contraction functors. First, we recall some notions and notations from \cite{Wh}.

Let $M$ be a matroid. Two sequences of bases $(A_1,\ldots,A_m)$ and $(B_1,\ldots,B_m)$ are \emph{compatible} if $A_1\cup\ldots\cup A_m=B_1\cup\ldots\cup B_m$. Let $(A_1,\ldots,A_m)$ be a sequence of bases of a matroid $M$ and let $1\leq r<s\leq m$ and $x\in A_r$. Set
$$E(x;A_r,A_s)=\{y\in A_s:(A_r\backslash x)\cup y,(A_s\backslash y)\cup x\in\B_M\}.$$
Let $y\in E(x;A_r,A_s)$. Then we say
$$(A_1,\ldots,A_{r-1},(A_r\backslash x)\cup y,A_{r+1},\ldots,A_{s-1},(A_s\backslash y)\cup x,A_{s+1},\ldots,A_m)$$
is obtained from $(A_1,\ldots,A_m)$ by a \emph{symmetric exchange}. For two sequences of bases of a matroid $M$, $(A_1,\ldots,A_m)\sim^1_M (B_1,\ldots,B_m)$ means that $(B_1,\ldots,B_m)$ is obtained from $(A_1,\ldots,A_m)$ by a symmetric exchange.  Also, $(A_1,\ldots,A_m)\sim^2_M (B_1,\ldots,B_m)$ implies to $(B_1,\ldots,B_m)$ is obtained from $(A_1,\ldots,A_m)$ by a symmetric exchange or a permutation of the order of the bases.

If $U\subset A_r$, we let
$$E(U;A_r,A_s)=\{V\subset A_s:(A_r\backslash U)\cup V,(A_s\backslash V)\cup U\in\B_M\}.$$
Let $V\in E(U;A_r,A_s)$. Then we say $(A_1,\ldots,A_{r-1},(A_r\backslash U)\cup V,A_{r+1},\ldots,A_{s-1},(A_s\backslash V)\cup U,A_{s+1},\ldots,A_m)$ is obtained from $(A_1,\ldots,A_m)$ by a \emph{symmetric subset exchange}. We write $(A_1,\ldots,A_m)\sim^3_M (B_1,\ldots,B_m)$ if $(B_1,\ldots,B_m)$ is obtained from $(A_1,\ldots,A_m)$ by a symmetric subset exchange.

For $i=1,2,3$, set $\TE(i)$ the class of matroids with the property that for every matroid $M\in\TE(i)$ and every two compatible sequences $(A_1,\ldots,A_m)$ and $(B_1,\ldots,B_m)$ of bases of $M$, there exist the sequences $(C_{j1},\ldots,C_{jm})$, for $j=1,\ldots,t$, of bases of $M$ such that
$$(A_1,\ldots,A_m)\sim^i_M (C_{11},\ldots,C_{1m})\sim^i_M\ldots\sim^i_M (C_{t1},\ldots,C_{tm})\sim^i_M (B_1,\ldots,B_m).$$
It is easy to check that $\TE(1)\subseteq\TE(2)\subseteq\TE(3)$. White conjectured that theses classes are equal to the class of all matroids \cite[Conjecture 12]{Wh}. We will say that a matroid $M$ satisfies White's conjecture if $M\in\TE(i)$, for all $i=1,2,3$.

\begin{thm}\label{TE}
For a matroid $M$ on $[n]$, $\alpha\in\NN^n$ and $i=1,2,3$ we have $$M\in\TE(i)\Leftrightarrow M^\alpha\in\TE(i).$$
\end{thm}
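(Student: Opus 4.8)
The plan is to work with the projection $\pi\colon 2^{[n]^\alpha}\to 2^{[n]}$ defined above together with the ``first copy'' section $\iota\colon 2^{[n]}\to 2^{[n]^\alpha}$, $\iota(\{x_{i_1},\dots,x_{i_r}\})=\{x_{i_11},\dots,x_{i_r1}\}$. Everything rests on one fact: since $\B_{M^\alpha}=\bigcup_{C\in\B_M}\{C\}^\alpha$, a subset of $[n]^\alpha$ is a basis of $M^\alpha$ if and only if it uses at most one of the copies $x_{i1},\dots,x_{ik_i}$ of each $x_i$ and its $\pi$-image is a basis of $M$; in particular $\pi$ restricts to a bijection from a basis of $M^\alpha$ onto a basis of $M$, and $\pi\circ\iota=\mathrm{id}$. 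I read ``$(A_1,\dots,A_m)$ and $(B_1,\dots,B_m)$ compatible'' as $\sum_j\1_{A_j}=\sum_j\1_{B_j}$, i.e.\ each ground element lies in equally many $A_j$'s and $B_j$'s; this multiset reading, rather than mere equality of the union sets, is what is needed here, it is preserved by symmetric exchanges, permutations and symmetric subset exchanges alike, and it is transported both ways by $\pi$ and $\iota$. (One could instead use Lemma \ref{epsilon} to induct on $\sum_i k_i$ and reduce to adjoining a single parallel copy of one element; I will describe the direct argument.)

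For $M\in\TE(i)\Rightarrow M^\alpha\in\TE(i)$: given compatible sequences $(A_\bullet),(B_\bullet)$ of bases of $M^\alpha$, push them down to the compatible sequences $(\pi(A_\bullet)),(\pi(B_\bullet))$ of bases of $M$ and, using the hypothesis, fix a chain $(\pi(A_\bullet))\sim^i_M\cdots\sim^i_M(\pi(B_\bullet))$. I then lift this chain to $M^\alpha$ starting from $(A_\bullet)$: a symmetric exchange in $M$ that deletes $x$ from the $r$-th basis and inserts the element $y$ of the $s$-th basis is lifted to the symmetric exchange of $M^\alpha$ swapping the unique copy of $x$ currently in position $r$ with the unique copy of $y$ currently in position $s$ — its outcome again uses one copy per index and projects to bases of $M$, hence consists of bases of $M^\alpha$, and its $\pi$-image is the prescribed move; permutations and symmetric subset exchanges lift verbatim. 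Running through the chain produces a sequence $(D_\bullet)$ of bases of $M^\alpha$ with $\pi(D_j)=\pi(B_j)$ for all $j$ and with the same multiset union as $(B_\bullet)$. It remains to connect $(D_\bullet)$ to $(B_\bullet)$, and here one treats each parallel class $\{x_{i1},\dots,x_{ik_i}\}$ separately: on the set $J_i$ of positions whose basis meets this class, $(D_\bullet)$ and $(B_\bullet)$ prescribe two assignments of copies that, by the multiset-union equality, are rearrangements of each other, and a transposition of the copies in positions $r$ and $s$ is a type-$1$ symmetric exchange of $M^\alpha$ (it fixes all other classes and all projections); composing such transpositions, class by class, carries $(D_\bullet)$ to $(B_\bullet)$.

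For $M^\alpha\in\TE(i)\Rightarrow M\in\TE(i)$: given compatible sequences $(A_\bullet),(B_\bullet)$ of bases of $M$, the sequences $(\iota(A_\bullet)),(\iota(B_\bullet))$ of bases of $M^\alpha$ are again compatible, so by hypothesis there is a chain $(\iota(A_\bullet))\sim^i_{M^\alpha}\cdots\sim^i_{M^\alpha}(\iota(B_\bullet))$; applying $\pi$ to it, I claim each single move becomes at most one move of the same type in $M$. Indeed, a symmetric exchange of $M^\alpha$ swapping copies lying over two distinct elements $x\neq y$ projects to the symmetric exchange of $M$ swapping $x$ and $y$; a swap of two copies of the same element projects to the identity; a permutation projects to a permutation; and a symmetric subset exchange projects, after discarding the part of the exchanged subsets lying over a common element (which contributes nothing), to a symmetric subset exchange of $M$. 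Concatenating, and using $\pi\circ\iota=\mathrm{id}$, yields a type-$i$ chain from $(A_\bullet)$ to $(B_\bullet)$.

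The delicate point I expect is the ``reshuffling within each parallel class'' step in the first implication, hand in hand with the bookkeeping that the lifted chain terminates at a sequence carrying the multiset union of $(B_\bullet)$. This is precisely where the multiset reading of ``compatible'' is indispensable — with the bare set-union reading the statement already fails ($U_{1,1}\in\TE(i)$ but $U_{1,2}\notin\TE(i)$, since two sequences of singleton bases of $U_{1,2}$ with equal union need not be rearrangements of one another) — and where one must verify that every transposition invoked is a genuine symmetric exchange and leaves the remaining parallel classes and all $\pi$-images untouched.
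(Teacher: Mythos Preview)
Your argument is correct and follows the same project/lift strategy as the paper, which packages the work into Lemmas~\ref{pi0}, \ref{expan to M} and~\ref{M to expan} (stated without proof). The one organisational difference is that the paper's Lemma~\ref{M to expan} asserts that \emph{any} compatible lift of the target pair is already reached by a single symmetric subset exchange from the given lift of the source pair, thereby absorbing your ``reshuffle within each parallel class'' step into the lifting lemma itself; you instead lift the chain naively and then perform the reshuffling explicitly at the end by type-$1$ swaps of parallel copies. Your version has the advantage of treating $i=1,2,3$ uniformly, whereas the paper's lemma is phrased for subset exchanges and the paper handles $i=1$ separately via the Laso\'n--Micha{\l}ek criterion (Lemma~\ref{TE1-TE2}). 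Your remark that ``compatible'' must be read as equality of multiset unions, not set unions, is well taken: the paper writes $A_1\cup\cdots\cup A_m=B_1\cup\cdots\cup B_m$, but your $U_{1,1}$ versus $U_{1,2}$ example shows the set-union reading would falsify the theorem, and White's original definition is indeed the multiset one. Your aside about discarding common parts when projecting a subset exchange is harmless but unnecessary: since $B'_r=(A'_r\setminus U')\cup V'$ is a basis of $M^\alpha$ it meets each parallel class at most once, so $\pi(V')$ is automatically disjoint from $\pi(A'_r\setminus U')$, and exchanging $\pi(U')$ with $\pi(V')$ is a bona fide symmetric subset exchange in $M$ even when $\pi(U')\cap\pi(V')\neq\emptyset$.
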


Before proving Theorem \ref{TE}, we present some auxiliary results.

\begin{lem}\label{pi0}
Let $M$ be a matroid on $[n]$ and $\alpha\in\NN^n$.
\begin{enumerate}[\upshape (i)]
  \item For any two compatible sequences $(A_1,\ldots,A_m)$ and $(B_1,\ldots,B_m)$ of bases of $M$, there exist two compatible sequences $(A'_1,\ldots,A'_m)$ and $(B'_1,\ldots,B'_m)$ of bases of $M^\alpha$ such that $\pi(A'_i)=A_i$ and $\pi(B'_i)=B_i$;
  \item If $(A'_1,\ldots,A'_m)$ and $(B'_1,\ldots,B'_m)$ are two compatible sequences of bases of $M^\alpha$ then $(\pi(A'_1),\ldots,\pi(A'_m))$ and $(\pi(B'_1),\ldots,\pi(B'_m))$ are two compatible sequences of bases of $M$.
\end{enumerate}
\end{lem}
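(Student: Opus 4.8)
The plan is to prove the two parts essentially independently, with part (ii) being routine and part (i) requiring a small amount of care about how the chosen "lifts" interact along the whole sequence.

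For part (ii), suppose $(A'_1,\ldots,A'_m)$ and $(B'_1,\ldots,B'_m)$ are compatible sequences of bases of $M^\alpha$. First I would note that $\pi$ carries any basis of $M^\alpha$ to a basis of $M$: by Theorem \ref{expan matroid}, $M^\alpha$ is a matroid, and by construction of the expansion, if $B' \in \B_{M^\alpha}$ then $B' \in \{B\}^\alpha$ for some $B \in \B_M$, whence $\pi(B') = B$; moreover $|B'| = |B|$ since a set of the form $\{x_{i_1 j_1},\ldots,x_{i_r j_r}\}$ with distinct first indices has $r = |\pi(B')|$ elements. (One should observe that every basis of $M^\alpha$ genuinely has the form $\{x_{i_1j_1},\ldots,x_{i_rj_r}\}$ with the $i_l$ pairwise distinct, because two elements $x_{ij}, x_{ij'}$ with the same first index can never lie in a common basis of $M^\alpha$ — this is exactly the "parallel elements" observation already used in the proof of Theorem \ref{binary}.) Then for compatibility I would argue $\bigcup_l \pi(A'_l) = \pi\bigl(\bigcup_l A'_l\bigr) = \pi\bigl(\bigcup_l B'_l\bigr) = \bigcup_l \pi(B'_l)$, using that $\pi$ commutes with unions as a map on subsets. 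This finishes (ii).

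For part (i), given compatible $(A_1,\ldots,A_m)$ and $(B_1,\ldots,B_m)$ of bases of $M$, I would lift each element in a globally consistent way. Write $U = A_1 \cup \cdots \cup A_m = B_1 \cup \cdots \cup B_m$. The key device is to pick, for each $i$ with $x_i \in U$, a single index $\sigma(i) \in \{1,\ldots,k_i\}$ (say $\sigma(i)=1$), and then set $A'_l = \{x_{i\sigma(i)} : x_i \in A_l\}$ and $B'_l = \{x_{i\sigma(i)} : x_i \in B_l\}$. Since $A_l \in \B_M$ and $\{A_l\}^\alpha \subseteq \B_{M^\alpha}$ contains the particular choice $\{x_{i\sigma(i)} : x_i \in A_l\}$, each $A'_l$ is a basis of $M^\alpha$, and likewise each $B'_l$; clearly $\pi(A'_l) = A_l$ and $\pi(B'_l) = B_l$. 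Finally $\bigcup_l A'_l = \{x_{i\sigma(i)} : x_i \in U\} = \bigcup_l B'_l$, so the two lifted sequences are compatible.

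The only real subtlety — and the step I would be most careful about — is making sure the lift is performed with the \emph{same} choice function $\sigma$ for both sequences simultaneously; lifting $(A_1,\ldots,A_m)$ and $(B_1,\ldots,B_m)$ independently could destroy compatibility. Using one $\sigma$ on the common union $U$ repairs this at once. No exchange-property verification is needed here, since we are only producing bases and checking a set equality; all the matroid content is imported from Theorem \ref{expan matroid} and the structural remark about parallel elements $x_{ij}$ sharing a first index.
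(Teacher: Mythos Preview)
Your proof is correct. The paper states Lemma~\ref{pi0} without proof (it appears among the auxiliary results preceding the proof of Theorem~\ref{TE} but no argument is supplied), so there is nothing to compare against; your choice of a single section $\sigma:U\to[n]^\alpha$, $x_i\mapsto x_{i\sigma(i)}$, to lift both sequences simultaneously in part~(i), and the direct verification via $\pi(\bigcup_l A'_l)=\bigcup_l\pi(A'_l)$ in part~(ii), are exactly the natural arguments and go through without difficulty.
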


\begin{lem}\label{expan to M}
Let $M$ be a matroid on $[n]$ and let $\alpha\in\NN^n$. For $B'_1,B'_2\in\B_{M^\alpha}$, if $(B'_1,B'_2)$ is obtained from $(A'_1,A'_2)$ by a symmetric subset exchange then
\begin{enumerate}[\upshape (i)]
  \item $\pi(B'_i)=\pi(A'_i)$ for $i=1,2$ or
  \item $(\pi(B'_1),\pi(B'_2))$ is obtained from $(\pi(A'_1),\pi(A'_2))$ by a symmetric subset exchange.
\end{enumerate}
\end{lem}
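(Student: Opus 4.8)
The plan is to transport the exchange data through the projection $\pi$ and to verify directly that what one obtains over $M$ is again a symmetric subset exchange (possibly a trivial one). First I would unwind the hypothesis: a symmetric subset exchange producing $(B'_1,B'_2)$ from $(A'_1,A'_2)$ means there are $U\subseteq A'_1$ and $V\in E(U;A'_1,A'_2)$ with $B'_1=(A'_1\setminus U)\cup V$ and $B'_2=(A'_2\setminus V)\cup U$, where in particular $(A'_1\setminus U)\cup V$ and $(A'_2\setminus V)\cup U$ lie in $\B_{M^\alpha}$ (I take $m=2$, $r=1$, $s=2$ in the general definition).

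The crucial elementary remark is that $\pi$ is injective on every basis of $M^\alpha$: since $M^\alpha=\bigcup_{C\in\B_M}\{C\}^\alpha$, any $C'\in\B_{M^\alpha}$ lies in $\{C\}^\alpha$ for some $C\in\B_M$, so $\pi(C')=C\in\B_M$ and $|C'|=|C|$, whence $\pi|_{C'}$ is a bijection onto $\pi(C')$. Two consequences I would record: $\pi$ carries bases of $M^\alpha$ to bases of $M$, and for any $U\subseteq A'_i$ we have $\pi(A'_i\setminus U)=\pi(A'_i)\setminus\pi(U)$ (an identity that would fail for a non-injective map, and this is the one genuinely load-bearing point of the whole argument).

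Now set $\bar U=\pi(U)\subseteq\pi(A'_1)$ and $\bar V=\pi(V)\subseteq\pi(A'_2)$. Applying $\pi$ to the two defining equalities and using the remark gives $\pi(B'_1)=(\pi(A'_1)\setminus\bar U)\cup\bar V$ and $\pi(B'_2)=(\pi(A'_2)\setminus\bar V)\cup\bar U$, and both right-hand sides are bases of $M$. Hence $\bar V\in E(\bar U;\pi(A'_1),\pi(A'_2))$, and $(\pi(B'_1),\pi(B'_2))$ is obtained from $(\pi(A'_1),\pi(A'_2))$ by the symmetric subset exchange determined by $(\bar U,\bar V)$; this is alternative (ii). If in addition $\pi(B'_1)=\pi(A'_1)$, then using $|\bar U|=|U|=|V|=|\bar V|$ — the equality $|U|=|V|$ holding because all the $A'_i,B'_i$ are equicardinal bases of $M^\alpha$ — one checks $\bar U=\bar V\subseteq\pi(A'_1)\cap\pi(A'_2)$, so also $\pi(B'_2)=\pi(A'_2)$, which is alternative (i).

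I do not anticipate a real obstacle here: the content is entirely bookkeeping about how $\pi$ interacts with $\cup$ and $\setminus$, together with the fact that $\pi$ sends bases to bases. The only place demanding care — and the thing to state precisely before the computation — is the injectivity of $\pi$ on a single basis of $M^\alpha$, since it is exactly what legitimises rewriting $\pi\bigl((A'_i\setminus U)\cup V\bigr)$ as $(\pi(A'_i)\setminus\pi(U))\cup\pi(V)$; and secondarily the small cardinality count $|U|=|V|$, which is what forces the degenerate case to be degenerate on both coordinates simultaneously, matching the ``for $i=1,2$'' form of alternative (i).
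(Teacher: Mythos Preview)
The paper states this lemma without proof, so there is no original argument to compare against. Your proof is correct and is the natural one: the essential observation is that $\pi$ restricts to a bijection on each basis of $M^\alpha$, which is precisely what legitimises $\pi(A'_i\setminus U)=\pi(A'_i)\setminus\pi(U)$, and from there the verification that $(\bar U,\bar V)$ witnesses a symmetric subset exchange in $M$ is immediate. Your closing paragraph even supplies slightly more than the lemma requires, by checking that alternative~(i) is symmetric in the two coordinates; strictly speaking, once you have established that alternative~(ii) always holds (with a possibly trivial exchange), the disjunction is already proved.
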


\begin{lem}\label{M to expan}
Let $M$ be a matroid on $[n]$ and let $\alpha\in\NN^n$. Suppose that $(B_1,B_2)$ is obtained from $(A_1,A_2)$ by a symmetric subset exchange. Let $(A'_1,A'_2)$ and $(B'_1,B'_2)$ be two compatible sequences of bases of $M^\alpha$ with $\pi(A'_i)=A_i$, $\pi(B'_i)=B_i$. Then $(B'_1,B'_2)$ is obtained from $(A'_1,A'_2)$ by a symmetric subset exchange.
\end{lem}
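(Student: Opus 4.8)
The plan is to realise the required exchange in $M^\alpha$ by an explicit choice of subsets, namely $U'=A'_1\setminus B'_1$ and $V'=B'_1\setminus A'_1$, and the heart of the argument is the identity $A'_1\cap A'_2=B'_1\cap B'_2$, which I will obtain by pushing the corresponding identity for $M$ up through $\pi$. So the proof splits into: (1) normalise the given exchange in $M$ and extract $A_1\cap A_2=B_1\cap B_2$; (2) lift this to $A'_1\cap A'_2=B'_1\cap B'_2$ using compatibility and the fact that $\pi$ is a bijection on each basis; (3) verify that $U',V'$ do the job.

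For (1): by definition there are $U\subseteq A_1$ and $V\in E(U;A_1,A_2)$ with $B_1=(A_1\setminus U)\cup V$ and $B_2=(A_2\setminus V)\cup U$. Replacing $U$ by $U\setminus V$ and $V$ by $V\setminus U$ changes neither $B_1$ nor $B_2$ and keeps $V$ in $E(U;A_1,A_2)$, so I may assume $U\cap V=\emptyset$. Writing $\rho$ for the common cardinality of the bases of $M$, comparing $|(A_1\setminus U)\cup V|=\rho$ with $|(A_2\setminus V)\cup U|=\rho$ forces $U\cap A_2=\emptyset$, $V\cap A_1=\emptyset$ and $|U|=|V|$. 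Hence every $x\in A_1\cap A_2$ avoids both $U$ and $V$, so lies in $A_1\setminus U\subseteq B_1$ and in $A_2\setminus V\subseteq B_2$; since also $B_1\cup B_2=A_1\cup A_2$, a cardinality count yields $A_1\cap A_2=B_1\cap B_2$.

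For (2): a basis $B'$ of $M^\alpha$ lies in $\{\pi(B')\}^\alpha$, so $\pi$ restricts to a bijection $B'\to\pi(B')$; thus each of $A'_1,A'_2,B'_1,B'_2$ meets the fibre $\pi^{-1}(x_i)$ in at most one element, and in exactly one precisely when $x_i$ belongs to $A_1,A_2,B_1,B_2$ respectively. Take $x_{ij}\in A'_1\cap A'_2$. Then $A'_1\cup A'_2$ meets $\pi^{-1}(x_i)$ in the single element $x_{ij}$, hence by compatibility so does $B'_1\cup B'_2$; as $x_i\in A_1\cap A_2=B_1\cap B_2$, both $B'_1$ and $B'_2$ meet this fibre, necessarily in $x_{ij}$, so $x_{ij}\in B'_1\cap B'_2$. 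Therefore $A'_1\cap A'_2\subseteq B'_1\cap B'_2$, and since $A'_1\cup A'_2=B'_1\cup B'_2$ and all four sets have size $\rho$, we conclude $A'_1\cap A'_2=B'_1\cap B'_2$.

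For (3): set $U'=A'_1\setminus B'_1$, $V'=B'_1\setminus A'_1$. Compatibility gives $V'\subseteq A'_2\setminus A'_1$ and $U'\subseteq B'_2$; the identity of step (2) gives $U'\cap A'_2=\emptyset$ (an element of $A'_1\cap A'_2$ lies in $B'_1$); and $|A'_1|=|B'_1|$ gives $|U'|=|V'|$. Then $(A'_1\setminus U')\cup V'=(A'_1\cap B'_1)\cup(B'_1\setminus A'_1)=B'_1$ formally, while $(A'_2\setminus V')\cup U'$ is a disjoint union contained in $B'_2$ — its part $A'_2\setminus V'$ lies in $B'_2$ since each of its elements is either outside $B'_1$ (hence in $B'_2$) or in $A'_1\cap A'_2=B'_1\cap B'_2\subseteq B'_2$ — and has size $\rho=|B'_2|$, so it equals $B'_2$. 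Since $B'_1,B'_2\in\B_{M^\alpha}$ by hypothesis, $V'\in E(U';A'_1,A'_2)$ in $M^\alpha$, so $(B'_1,B'_2)$ is obtained from $(A'_1,A'_2)$ by a symmetric subset exchange. The one step that genuinely needs care is the identity $A'_1\cap A'_2=B'_1\cap B'_2$: the naive lift — putting into $U'$ the element of $A'_1$ over each vertex of $U$ — can fail to reproduce $B'_1$, because over a vertex $x_i\in A_1\cap A_2\cap B_1\cap B_2$ the fibres selected by $A'_1,A'_2$ and by $B'_1,B'_2$ may be interchanged; taking $U'=A'_1\setminus B'_1$ directly, with this identity as justification, is precisely what absorbs such interchanges.
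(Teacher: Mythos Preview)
The paper states Lemma~\ref{M to expan} (together with Lemmas~\ref{pi0} and~\ref{expan to M}) as an auxiliary result and then uses it in the proof of Theorem~\ref{TE}, but it does not supply a proof of the lemma itself. So there is no argument in the paper to compare yours against; I can only check correctness.

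Your argument is correct. Step~(1) is a clean normalisation: after passing to $U\setminus V$, $V\setminus U$ the cardinality computation does force $U\cap A_2=V\cap A_1=\emptyset$ and $|U|=|V|$, and then $A_1\cap A_2=B_1\cap B_2$ follows by inclusion--exclusion. Step~(2) is the crucial point, and your fibre argument is sound: for $x_{ij}\in A'_1\cap A'_2$ the whole multiset $A'_1\cup A'_2$ meets $\pi^{-1}(x_i)$ in $\{x_{ij}\}$ only, compatibility transfers this to $B'_1\cup B'_2$, and $x_i\in B_1\cap B_2$ then pins both $B'_1$ and $B'_2$ to $x_{ij}$. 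Step~(3) is routine once $A'_1\cap A'_2=B'_1\cap B'_2$ is in hand; your verification that $(A'_2\setminus V')\cup U'\subseteq B'_2$ with the right cardinality is complete.

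Your closing remark is also on point: the naive lift ``take the $A'_1$-element over each vertex of $U$'' can genuinely fail, because over a vertex of $A_1\cap A_2$ the two compatible pairs may swap their fibre representatives; choosing $U'=A'_1\setminus B'_1$ is exactly what accommodates this. Since the paper gives no details here, your write-up in fact fills a gap that the paper leaves to the reader.
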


Now we prove Theorem \ref{TE} in three parts:

{\textbf{Membership in $\TE(3)$:}

Assume that $M\in\TE(3)$ and let $(A'_1,\ldots,A'_m)$ and $(B'_1,\ldots,B'_m)$ be two compatible sequences of bases of $M^\alpha$. Let $A_i=\pi(A'_i)$ and $B_i=\pi(B'_i)$ for $i=1,\ldots,m$. By Lemma \ref{pi0}(ii), $(A_1,\ldots,A_m)$ and $(B_1,\ldots,B_m)$ are compatible and, by the assumption,  $(B_1,\ldots,B_m)$ is obtained from $(A_1,\ldots,A_m)$ by a composition of symmetric subset exchanges. It follows from Lemma \ref{M to expan} that $(B'_1,\ldots,B'_m)$ is obtained from $(A'_1,\ldots,A'_m)$ by a composition of symmetric subset exchanges. Therefore $M^\alpha\in\TE(3)$.

Similarly, the converse direction follows from Lemmas \ref{pi0}(i) and \ref{expan to M}.

{\textbf{Membership in $\TE(2)$:}

Let $M\in\TE(2)$. Let $(A'_1,\ldots,A'_m)$ and $(B'_1,\ldots,B'_m)$ be two compatible sequences of $M^\alpha$. By Lemma \ref{pi0}(ii), $(\pi(A'_1),\ldots,\pi(A'_m))$ and $(\pi(B'_1),\ldots,\pi(B'_m))$ are two compatible sequences of bases of $M$ and so, by the assumption, $(\pi(B'_1),\ldots,\pi(B'_m))$ is obtained from $(\pi(A'_1),\ldots,\pi(A'_m))$ by a composition of symmetric exchanges and permutations of the order of the bases. It follows from Lemma \ref{M to expan} that $(B'_1,\ldots,B'_m)$ is obtained from $(A'_1,\ldots,A'_m)$ by a composition of symmetric exchanges and permutations of the order of the bases.

The converse direction obtains in a similar argument by using Lemmas \ref{pi0}(i) and \ref{expan to M}.

{\textbf{Membership in $\TE(1)$:}

\begin{lem}\label{TE1-TE2}
(\cite{LaMi}) Let $M$ be a matroid. Then $M\in\TE(1)$ if and only if $M\in\TE(2)$ and any pair $(A_2,A_1)$ of bases of $M$ is obtained from $(A_1,A_2)$ by a composition of symmetric exchanges.
\end{lem}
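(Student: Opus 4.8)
The plan is to prove the two implications separately; the forward one is immediate and all the content sits in the reverse direction, where permutation steps have to be traded for symmetric exchanges. For the ``only if'' part, assume $M\in\TE(1)$. Then $M\in\TE(2)$ since $\TE(1)\subseteq\TE(2)$. For any two bases $A_1,A_2$ of $M$ the two-term sequences $(A_1,A_2)$ and $(A_2,A_1)$ are compatible, because $A_1\cup A_2=A_2\cup A_1$; applying the defining property of $\TE(1)$ to this compatible pair gives a chain $(A_1,A_2)\sim^1_M\cdots\sim^1_M(A_2,A_1)$, and since each $\sim^1_M$-step is by definition a single symmetric exchange, $(A_2,A_1)$ is obtained from $(A_1,A_2)$ by a composition of symmetric exchanges.

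For the ``if'' part, suppose $M\in\TE(2)$ and that every reversed pair $(A_2,A_1)$ of bases is obtained from $(A_1,A_2)$ by a composition of symmetric exchanges. Let $(A_1,\ldots,A_m)$ and $(B_1,\ldots,B_m)$ be compatible sequences of bases of $M$. By $M\in\TE(2)$ they are joined by a chain of $\sim^2_M$-steps, each of which is either a symmetric exchange --- already a legitimate $\sim^1_M$-step --- or a permutation of the order of the bases. It therefore suffices to show that every permutation step can be rewritten as a composition of symmetric exchanges. Decomposing an arbitrary permutation into adjacent transpositions reduces this to the transposition that interchanges the bases in two consecutive positions $r,r+1$ of a sequence $(C_1,\ldots,C_m)$ while fixing all the other entries. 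Applying the hypothesis to the pair $(C_r,C_{r+1})$ yields a chain $(C_r,C_{r+1})\sim^1_M\cdots\sim^1_M(C_{r+1},C_r)$; carrying out each of these symmetric exchanges on positions $r$ and $r+1$ of the full sequence and leaving $C_1,\ldots,C_{r-1},C_{r+2},\ldots,C_m$ untouched produces a $\sim^1_M$-chain of full sequences from $(C_1,\ldots,C_r,C_{r+1},\ldots,C_m)$ to $(C_1,\ldots,C_{r+1},C_r,\ldots,C_m)$. Concatenating such local replacements over all adjacent transpositions in the chosen decomposition of each permutation step converts the $\sim^2_M$-chain into a $\sim^1_M$-chain joining $(A_1,\ldots,A_m)$ and $(B_1,\ldots,B_m)$, whence $M\in\TE(1)$.

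I do not expect a genuine obstacle here. The one point that needs care is the verification that a symmetric exchange carried out on a two-element subsequence is still a symmetric exchange of the ambient $m$-term sequence and that every intermediate sequence consists of bases of $M$; both are immediate from the definition of symmetric exchange, and since a symmetric exchange on $(C_r,C_{r+1})$ leaves the union $C_r\cup C_{r+1}$ --- hence the union of the whole sequence --- unchanged, compatibility is preserved throughout. The real content of the lemma is just the observation that permutations are generated by adjacent transpositions and that the auxiliary condition is exactly what is needed to realise one adjacent transposition by symmetric exchanges alone.
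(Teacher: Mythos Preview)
Your argument is correct. The paper itself does not supply a proof of this lemma; it is quoted from Laso\'n--Micha\l ek \cite{LaMi} and used as a black box, so there is nothing in the paper to compare against beyond noting that your write-up fills in exactly the kind of elementary argument one expects.

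One small simplification you might make: in the definition of a symmetric exchange the two positions $r<s$ need not be adjacent, so once you know that any pair $(C_r,C_s)$ can be carried to $(C_s,C_r)$ by a composition of symmetric exchanges, you can realise an \emph{arbitrary} transposition directly on positions $r$ and $s$ of the full sequence, without first decomposing the permutation into adjacent transpositions. Either route works, and your version is perfectly valid; the adjacent-transposition step is just slightly more than is needed.
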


Since $M\in\TE(2)$ if and only if $M^\alpha\in\TE(2)$, it suffices to show that any pair $(A_2,A_1)$ of bases of $M$ is obtained from $(A_1,A_2)$ by a composition of symmetric exchanges if and only if any pair of bases of $M^\alpha$ has this property.

Suppose that any pair $(A_2,A_1)$ of bases of $M$ is obtained from $(A_1,A_2)$ by a composition of symmetric exchanges. Consider a pair $(B'_2,B'_1)$ of bases of $M^\alpha$. Let $B_i=\pi(B'_i)$. By the assumption, $(B_2,B_1)$ is obtained from $(B_1,B_2)$ by a composition of symmetric exchanges. Therefore
$$(B_1,B_2)\sim^1_M(C_{11},C_{12})\sim^1_M\ldots\sim^1_M (C_{t1},C_{t2})\sim^1_M (B_2,B_1).$$
By Lemma \ref{pi0}(i), one can choose the bases $C'_{ij}\in\B_{M^\alpha}$ with $\pi(C'_{ij})=C_{ij}$ such that $(B'_1,B'_2),(C'_{11},C'_{12}),\ldots,(C'_{t1},C'_{t2})$ are pairwise compatible. It follows from Lemma \ref{M to expan} that $(B'_2,B'_1)$ is obtained from $(B'_1,B'_2)$ by a composition of symmetric exchanges.

The converse follows from Lemmas \ref{pi0}(ii) and \ref{expan to M} in a similar argument.

\begin{cor}\label{contract white}
Let $M$ be a matroid on $[n]$ and let $\alpha\in\NN^n$. Then $M$ satisfies White's conjecture if and only if the contraction of $M$ does.
\end{cor}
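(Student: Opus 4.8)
The plan is to deduce this immediately from Theorem~\ref{TE} together with Remark~\ref{expan contract}. Write $\overline{M}$ for the (unique) contraction of $M$; it is a matroid on a ground set $[m]$ with $m\le n$ (it is a matroid by Remark~\ref{expan contract}), and let $\beta=(a_1,\ldots,a_m)\in\NN^m$ be the tuple recording the sizes of the $\sim$-equivalence classes used to form the contraction. By Remark~\ref{expan contract}, the family $(\overline{M})^\beta$ coincides with $M$ up to a relabeling of the ground set; in particular $M\cong (\overline{M})^\beta$ as matroids. Note that the parameter $\alpha$ appearing in the statement plays no role: the contraction of $M$ is intrinsic to $M$.

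Next I would record the routine observation that membership in the class $\TE(i)$ depends only on the isomorphism type of a matroid. Indeed, for $i=1,2,3$ the conditions defining $\TE(i)$ are phrased entirely in terms of the basis set, the set operations $\backslash$ and $\cup$, and the predicate ``belongs to the basis set'', all of which are transported by any bijection of ground sets carrying $\B_M$ onto the basis set of $(\overline{M})^\beta$. Hence $M\in\TE(i)$ if and only if $(\overline{M})^\beta\in\TE(i)$, for each $i=1,2,3$.

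Now apply Theorem~\ref{TE}, but to the matroid $\overline{M}$ on $[m]$ and the tuple $\beta\in\NN^m$ in place of $M$ and $\alpha$: it gives, for each $i=1,2,3$,
$$\overline{M}\in\TE(i)\iff (\overline{M})^\beta\in\TE(i)\iff M\in\TE(i).$$
Since $M$ satisfies White's conjecture precisely when $M\in\TE(i)$ for all $i\in\{1,2,3\}$, and the same is true of $\overline{M}$, the two conditions are equivalent, which is the assertion.

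The argument has essentially no obstacle; the only two points requiring a line of justification are (a) that Remark~\ref{expan contract} genuinely yields an isomorphism $M\cong(\overline{M})^\beta$ of matroids rather than merely an equality of abstract families, and (b) the elementary fact that $\TE(i)$ is closed under matroid isomorphism, so that the relabeling appearing in Remark~\ref{expan contract} is harmless. Everything else is a direct invocation of Theorem~\ref{TE}.
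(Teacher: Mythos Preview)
Your proposal is correct and is precisely the intended argument: the paper leaves Corollary~\ref{contract white} without an explicit proof, treating it as an immediate consequence of Theorem~\ref{TE} combined with Remark~\ref{expan contract}, which is exactly what you do. Your observations that $\alpha$ plays no role and that $\TE(i)$ is invariant under relabeling are the only bookkeeping needed, and you handle both cleanly.
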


\begin{rem}
Note that the class of contracted matroids is very smaller than the class of all matroids. It follows from Corollary \ref{contract white} that to test White's conjecture for a class of matroids it suffices to turn our attention to their contractions.
\end{rem}

\begin{cor}\label{partition white}
Every partition matroid satisfies White's conjecture.
\end{cor}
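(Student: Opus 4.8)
The plan is to reduce the statement to uniform matroids and then to a classical fact about $(0,1)$-matrices. Recall that ``$M$ satisfies White's conjecture'' means $M\in\TE(i)$ for all $i=1,2,3$. Let $M(\P)$ be a partition matroid of rank $t$. By Corollary \ref{cont part unif} its contraction is a uniform matroid $U_{t,m}$, so by Corollary \ref{contract white} the matroid $M(\P)$ satisfies White's conjecture if and only if $U_{t,m}$ does; equivalently, by Theorem \ref{partition uniform} we may write $M(\P)\cong U_{t,m}^\alpha$ and then Theorem \ref{TE}, applied for $i=1,2,3$, gives the same reduction. Hence it suffices to show that every uniform matroid satisfies White's conjecture, and I would in fact prove the strongest assertion $U_{t,n}\in\TE(1)$, which, by $\TE(1)\subseteq\TE(2)\subseteq\TE(3)$, also gives $U_{t,n}\in\TE(2)$ and $U_{t,n}\in\TE(3)$.

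For this, encode a sequence $(A_1,\ldots,A_m)$ of bases of $U_{t,n}$ --- each $A_i$ being an arbitrary $t$-subset of $[n]$ --- as the $m\times n$ matrix over $\{0,1\}$ whose $i$-th row is the incidence vector of $A_i$ (identifying the ground set with the columns); all of its row sums equal $t$. Two such sequences are compatible precisely when the associated matrices have the same column-sum vector, while both automatically share the row-sum vector $(t,\ldots,t)$. The key observation is the shape of a single symmetric exchange in $U_{t,n}$: since a subset of $[n]$ is a basis of $U_{t,n}$ exactly when it has $t$ elements, for $r<s$ and $x\in A_r$ a non-trivial $y\in E(x;A_r,A_s)$ exists if and only if $x\in A_r\setminus A_s$ and $y\in A_s\setminus A_r$, and carrying out this exchange replaces the $2\times 2$ submatrix with $1$'s in entries $(r,x),(s,y)$ and $0$'s in $(r,y),(s,x)$ by the submatrix with those $1$'s and $0$'s interchanged. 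Thus a chain $(A_1,\ldots,A_m)\sim^1_{U_{t,n}}\cdots\sim^1_{U_{t,n}}(B_1,\ldots,B_m)$ is nothing but a sequence of $2\times 2$ interchanges of $(0,1)$-matrices, and every intermediate matrix still has all row sums $t$, hence encodes a genuine sequence of bases. The classical interchange theorem of Ryser --- any two $(0,1)$-matrices with equal row sums and equal column sums are connected by finitely many such interchanges --- then shows that any two compatible sequences of bases of $U_{t,n}$ are $\sim^1_{U_{t,n}}$-connected, so $U_{t,n}\in\TE(1)$ and the corollary follows by the reduction above.

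The bulk of the argument is therefore the interchange theorem, and that is the step I would expect to be the main obstacle if one insists on a self-contained treatment; it can be re-proved by the standard chain argument --- among the indices where the two sequences differ, locate the first one and shift a discrepancy along an alternating chain of later positions until a symmetric exchange strictly decreases $\sum_i|A_i\triangle B_i|$ --- but being classical I would present only its outline, or simply cite it. The remaining points, namely the description of $E(x;A_r,A_s)$ in $U_{t,n}$ together with its matrix translation, and the observation that symmetric exchanges preserve column sums (hence compatibility), are routine consequences of the cardinality characterization of bases of a uniform matroid.
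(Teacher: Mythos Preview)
Your reduction to uniform matroids via Corollaries~\ref{cont part unif} and~\ref{contract white} is exactly the paper's first move, and your proof that $U_{t,n}\in\TE(1)$ is correct. The route you take for the uniform case, however, is genuinely different from the paper's. The paper splits the problem using Lemma~\ref{TE1-TE2}: it first invokes Sturmfels~\cite{St} to get $U_{t,n}\in\TE(2)$ via the algebraic statement that the toric ideal of a uniform matroid is generated by quadrics corresponding to symmetric exchanges, and then checks the remaining pair-reversal condition $(A_1,A_2)\leadsto(A_2,A_1)$ by a short element-by-element swap. You instead bypass both the algebraic citation and the Lemma~\ref{TE1-TE2} decomposition by translating sequences of bases into $(0,1)$-matrices with constant row sums and identifying symmetric exchanges with $2\times2$ interchanges, so that Ryser's interchange theorem delivers $\TE(1)$ in one stroke. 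Your argument is purely combinatorial and arguably more transparent for the uniform case; the paper's approach trades that self-containment for brevity by outsourcing the $\TE(2)$ step to an existing algebraic result. Either way the reduction to contractions does the real work for partition matroids.
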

\begin{proof}
Let $M$ be a partition matroid. By Corollary \ref{cont part unif}, the contraction of $M$ is an uniform matroid. In view of Corollary \ref{contract white}, if we show that $U_{t,n}\in\TE(1)$ then the assertion is completed. It was shown in \cite{St} that the toric ideal of any uniform matroid is generated by quadratic binomials corresponding to symmetric exchanges and this is an algebraic meaning of the property $\TE(2)$. Hence $U_{t,n}\in\TE(2)$. Now suppose that $(A_2,A_1)$ is a pair of bases of $U_{t,n}$. Let $A_1=\{x_{i_1},\ldots,x_{i_t}\}$ and $A_2=\{x_{j_1},\ldots,x_{j_t}\}$. Let $i_{s+1}=j_{s+1},\ldots,i_t=j_t$ and $i_p\neq j_q$ for $p,q\leq s$. Then
\begin{flushleft}
$(A_1,A_2)\sim^1_{U_{t,n}}(\{x_{j_1},x_{i_2},\ldots,x_{i_t}\},\{x_{i_1},x_{j_2},\ldots,x_{j_t}\})\sim^1_{U_{t,n}}\ldots$
\end{flushleft}
\begin{flushright}
$\sim^1_{U_{t,n}} (\{x_{j_1},x_{j_2},\ldots,x_{j_{s-1}},x_{i_s},\ldots,x_{i_t}\},\{x_{i_1},x_{j_2},\ldots,x_{i_{s-1}},x_{j_s},\ldots,x_{j_t}\})\sim^1_{U_{t,n}} (A_2,A_1).$
\end{flushright}
It follows from Lemma \ref{TE1-TE2} that $M\in\TE(1)$, as desired.
\end{proof}

Let $M_1$ and $M_2$ be matroids on disjoint ground sets. The \emph{direct sum} of $M_1$ and $M_2$ is denoted by $M_1\oplus M_2$ and it is a matroid, by \cite[Proposition 4.2.12]{Ox}, on $\E_{M_1}\cup\E_{M_2}$ with the basis set
$$\B_{M_1\oplus M_2}=\{B_1\cup B_2:B_1\in\B_{M_1},B_2\in\B_{M_2}\}.$$

\begin{lem}\label{direct sum}
Let $M_1$ and $M_2$ be matroids on disjoint ground sets. If $M_1$ and $M_2$ satisfy White's conjecture then $M_1\oplus M_2$ does, too.
\end{lem}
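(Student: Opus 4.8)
The plan is to reduce statements about $\TE(i)$ for the direct sum $M_1 \oplus M_2$ to the corresponding statements for $M_1$ and $M_2$ by projecting sequences of bases onto each summand. First I would fix the key observation: a subset $B \subseteq \E_{M_1} \cup \E_{M_2}$ is a basis of $M_1 \oplus M_2$ if and only if $B \cap \E_{M_1} \in \B_{M_1}$ and $B \cap \E_{M_2} \in \B_{M_2}$. Consequently, given a sequence of bases $(A_1, \ldots, A_m)$ of $M_1 \oplus M_2$, writing $A_k^{(1)} = A_k \cap \E_{M_1}$ and $A_k^{(2)} = A_k \cap \E_{M_2}$ produces sequences of bases of $M_1$ and of $M_2$ respectively, and the union of the $A_k$ equals the disjoint union of $\bigcup_k A_k^{(1)}$ and $\bigcup_k A_k^{(2)}$; hence compatibility of two sequences of bases of $M_1 \oplus M_2$ is equivalent to simultaneous compatibility of the two projected pairs of sequences over $M_1$ and over $M_2$.

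Next I would analyze how a single symmetric exchange (or symmetric subset exchange, for the $i=3$ case) interacts with the two projections. Because the ground sets are disjoint, an element $x \in A_r$ lies in exactly one of $\E_{M_1}$, $\E_{M_2}$, say $\E_{M_1}$; then $E(x; A_r, A_s) \subseteq \E_{M_1}$ as well, since for $y \in \E_{M_2}$ the set $(A_r \setminus x) \cup y$ fails to meet $\E_{M_1}$ in a basis of $M_1$. So a symmetric exchange in $M_1 \oplus M_2$ involving an element of $\E_{M_1}$ induces a symmetric exchange in $M_1$ on the projected sequence and leaves the $M_2$-projection unchanged, and symmetrically for $\E_{M_2}$; the same holds verbatim for symmetric subset exchanges, since any $U \subseteq A_r$ splits as $U^{(1)} \sqcup U^{(2)}$ and the exchange data decouples. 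Permutations of the order of the bases obviously act identically on both projections.

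Now the argument runs as follows. Suppose $M_1, M_2$ satisfy White's conjecture, fix $i \in \{1,2,3\}$, and let $(A_1, \ldots, A_m)$ and $(B_1, \ldots, B_m)$ be compatible sequences of bases of $M_1 \oplus M_2$. Project to get compatible sequences over $M_1$; since $M_1 \in \TE(i)$, there is a chain of $\sim^i_{M_1}$-moves carrying $(A_1^{(1)}, \ldots, A_m^{(1)})$ to $(B_1^{(1)}, \ldots, B_m^{(1)})$. Lift each such move to a move of the same type on $M_1 \oplus M_2$ acting only on the $\E_{M_1}$-coordinates (this is legitimate precisely because of the decoupling in the previous paragraph: the lifted exchange stays within $\B_{M_1 \oplus M_2}$). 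This transforms $(A_1, \ldots, A_m)$ into a sequence whose $M_1$-projection is $(B_1^{(1)}, \ldots, B_m^{(1)})$ and whose $M_2$-projection is still $(A_1^{(2)}, \ldots, A_m^{(2)})$. Then repeat the same procedure on the $\E_{M_2}$-coordinates using $M_2 \in \TE(i)$, reaching a sequence with both projections equal to those of $(B_1, \ldots, B_m)$ — that is, equal to $(B_1, \ldots, B_m)$ itself. Concatenating the two chains of moves shows $(A_1, \ldots, A_m) \sim^i_{M_1 \oplus M_2} \cdots \sim^i_{M_1 \oplus M_2} (B_1, \ldots, B_m)$, so $M_1 \oplus M_2 \in \TE(i)$ for all $i$, as desired.

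The one point requiring genuine care — the main obstacle — is the $i=1$ case, where permutations of the order of the bases are not allowed, so one cannot rearrange sequences freely; here I would invoke Lemma \ref{TE1-TE2}, which reduces $\TE(1)$ to "$\TE(2)$ plus: every pair $(A_2, A_1)$ is obtained from $(A_1, A_2)$ by symmetric exchanges." The $\TE(2)$ half is already handled. For the swap property, given bases $A_1, A_2$ of $M_1 \oplus M_2$, project to get $(A_1^{(j)}, A_2^{(j)})$ for $j = 1,2$; by hypothesis (via Lemma \ref{TE1-TE2} applied to $M_1$ and to $M_2$) each pair $(A_2^{(j)}, A_1^{(j)})$ is reachable from $(A_1^{(j)}, A_2^{(j)})$ by symmetric exchanges. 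Lift the $M_1$-chain to act on $\E_{M_1}$-coordinates only, then the $M_2$-chain on $\E_{M_2}$-coordinates only; the composite carries $(A_1, A_2)$ to $(A_2, A_1)$ using only symmetric exchanges, no permutation. Applying Lemma \ref{TE1-TE2} in the reverse direction gives $M_1 \oplus M_2 \in \TE(1)$, completing the proof.
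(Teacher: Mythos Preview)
Your projection-and-lift strategy is the natural one, and indeed the paper leaves this proof to the reader, so there is nothing to compare against. The argument is essentially correct, but you have misidentified where the difficulty lies, and in doing so introduced a small gap.

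The direct lifting argument in your third paragraph already handles $i=1$ cleanly: $\sim^1$-moves are \emph{only} symmetric exchanges, so lifting the $M_1$-chain leaves the $M_2$-projection literally equal to $(A_1^{(2)},\ldots,A_m^{(2)})$, and then lifting the $M_2$-chain finishes the job. No permutations ever appear, so there is no obstacle and no need to invoke Lemma~\ref{TE1-TE2}. Once $M_1\oplus M_2\in\TE(1)$ is established, membership in $\TE(2)$ and $\TE(3)$ follows from $\TE(1)\subseteq\TE(2)\subseteq\TE(3)$.

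The actual trouble spot is your treatment of $i=2$. A $\sim^2$-move may be a permutation of the order of the bases, and a permutation cannot be lifted ``to act only on the $\E_{M_1}$-coordinates'': permuting the sequence of bases of $M_1\oplus M_2$ necessarily permutes the $M_2$-projection as well. So your sentence ``whose $M_2$-projection is still $(A_1^{(2)},\ldots,A_m^{(2)})$'' is not justified when the $M_1$-chain contains permutation moves. Trying to repair this by alternately fixing the two projections leads to a ping-pong that does not obviously terminate. Fortunately none of this is needed, since the $i=1$ case (argued directly, without Lemma~\ref{TE1-TE2}) already implies the $i=2$ case. Your final paragraph, which routes $i=1$ through the flawed $i=2$ argument, should simply be deleted.
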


The proofs of above lemma is easy and we leave them to the reader.

\begin{rem}\label{white rem}
(i)
Let $M$ be a matroid on $[n]=\{x_1,\ldots,x_n\}$ and let $z$ be disjoint from $x_i$'s. Then $M$ satisfies White's conjecture if and only if the matroid $N$ with the basis set $\{A\cup z:A\in\B_M\}$ does.

(ii) Let $M$ be a matroid on $[n]$. Consider $\{z_{ij}:i=1,\ldots,r,j=1,\ldots,s\}$ a set disjoint from $[n]$. Let $t\leq r,s$. Then the set
$$\B=\{A\cup (\cup^t_{k=1}z_{i_kj_k}):A\in\B_M, 1\leq i_1<\ldots<i_t\leq r, j_k\in [s]\}$$ is the basis set of a matroid $N$. In fact, $N=M\oplus M(\P)$ where $\P=\{A_1,\ldots,A_r\}$ and $A_i=\{z_{i1},\ldots,z_{is}\}$ for all $i$. Especially, if $M$ satisfies White's conjecture then it follows from Lemma \ref{direct sum} and Corollary \ref{partition white} that $N$ satisfies White's conjecture, too.
\end{rem}

Combining Theorem \ref{binary} with Theorem \ref{TE} we obtain

\begin{cor}
A binary matroid satisfies White's conjecture if and only if its contraction does.
\end{cor}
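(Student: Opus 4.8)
The plan is to reduce the statement to Theorem \ref{TE} applied to the contraction of $M$, exactly as in Corollary \ref{contract white}, while observing in addition that the class of binary matroids is closed under contraction. Write $\overline{M}$ for the contraction of $M$ by the tuple $\alpha\in\NN^m$ coming from the equivalence relation $\sim$. By Remark \ref{expan contract}, the matroids $(\overline{M})^\alpha$ and $M$ coincide up to a relabeling of the ground set; in particular they have the same circuits (up to that relabeling), so Theorem \ref{binary}, applied to $\overline{M}$ with the exponent $\alpha$, shows that $\overline{M}$ is binary precisely because $M=(\overline{M})^\alpha$ is binary.

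Next I would invoke Theorem \ref{TE} with the matroid $\overline{M}$ in place of $M$: for each $i\in\{1,2,3\}$ one has $\overline{M}\in\TE(i)$ if and only if $(\overline{M})^\alpha\in\TE(i)$. Since $\TE(i)$ is defined purely in terms of compatible sequences of bases and is therefore invariant under relabeling the ground set, and since $(\overline{M})^\alpha\cong M$, this reads $\overline{M}\in\TE(i)\Leftrightarrow M\in\TE(i)$. Intersecting over $i=1,2,3$ and recalling that a matroid satisfies White's conjecture exactly when it lies in $\TE(i)$ for all three values of $i$, we conclude that $M$ satisfies White's conjecture if and only if $\overline{M}$ does; by the previous paragraph $\overline{M}$ is again binary, so the equivalence lives entirely inside the class of binary matroids.

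There is no genuine obstacle here: the statement is a direct specialization of Corollary \ref{contract white}, and the only point needing a word of justification is that replacing $M$ by $(\overline{M})^\alpha$ is harmless both for the property "binary" and for $\TE(i)$-membership, since it merely relabels ground-set elements. If desired, one can phrase the conclusion so as to make explicit its practical content, parallel to the remark following Corollary \ref{contract white}: to verify White's conjecture for all binary matroids it suffices to verify it for the contracted binary matroids, a strictly smaller family.
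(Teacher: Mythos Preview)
Your proposal is correct and matches the paper's approach exactly: the paper presents this corollary without a separate proof, simply stating that it is obtained by combining Theorem~\ref{binary} with Theorem~\ref{TE}, which is precisely what you do (via Remark~\ref{expan contract} to identify $M$ with $(\overline{M})^\alpha$). Your added remarks about relabeling and about restricting attention to contracted binary matroids are correct elaborations but add nothing essentially new.
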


\begin{exam}
Consider the matroid $M$ on $[7]$ with the basis set
$$\B_M=\{\{x_1,x_2,x_3,x_5\},\{x_1,x_2,x_3,x_7\},\{x_1,x_2,x_5,x_6\},\{x_1,x_2,x_5,x_7\},\{x_1,x_2,x_6,x_7\},$$
$$\{x_1,x_3,x_4,x_5\},\{x_1,x_3,x_4,x_7\},\{x_1,x_4,x_5,x_6\},\{x_1,x_4,x_5,x_7\},\{x_1,x_4,x_6,x_7\},$$
$$\{x_2,x_3,x_5,x_7\},\{x_2,x_5,x_6,x_7\},\{x_3,x_4,x_5,x_7\},\{x_4,x_5,x_6,x_7\}\}.$$
It is easy to check that $M$ is binary. To see that $M$ satisfies White's conjecture, we first contract $M$. The contraction of $M$ is a binary matroid on $\{x_1,x_2,x_3,x_5,x_7\}$ with the basis set
$$\B_{\overline{M}}=\{\{x_1,x_2,x_3,x_5\},\{x_1,x_2,x_3,x_7\},\{x_1,x_2,x_5,x_7\},\{x_2,x_3,x_5,x_7\}\}.$$
On the other hand, $\B_{\overline{M}}$ can be obtained by adding $x_2$ to all bases of the uniform matroid $U_{3,4}$ with the ground set $\{x_1,x_3,x_5,x_7\}$. It follows from Remark \ref{white rem}(i) that $\overline{M}$ satisfies White's conjecture and so $M$ satisfies White's conjecture, too.
\end{exam}

\footnotesize{

}

\end{document}